\newtheorem{thm}{Theorem}[section]
\newtheorem{lm}[thm]{Lemma}
\newtheorem{prop}[thm]{Proposition}
\theoremstyle{definition}
\newtheorem{df}[thm]{Definition}
\newtheorem*{df*}{Definition}
\theoremstyle{remark}
\newtheorem{rem}[thm]{Remark}
\newtheorem*{rem*}{Remark}
\numberwithin{equation}{section}
\newcommand{\ci}[1]{_{ {}_{\scriptstyle #1}}}
\newcommand{\ti}[1]{_{\scriptstyle \text{\rm #1}}}
\newcommand{\Atd}{A_2^{\ensuremath\scriptstyle\textup{d}}}
\newcommand{\Mdnorm}[1]{\left[ #1 \right]_{A_2}^{{\scriptstyle\textup{d}}} }
\newcommand{\f}{\varphi}
\newcommand{\1}{\mathbf{1}}
\newcommand{\supp}{\operatorname{supp}}
\newcommand{\cB}{\mathcal{B}}
\newcommand{\cX}{\mathcal{X}}
\newcommand{\cz}{Calder\'{o}n--Zygmund\ }
\newcommand{\C}{\mathbb{C}}
\newcommand{\Z}{\mathbb{Z}}
\newcommand{\om}{\omega}
\newcommand{\E}{\mathbb{E}}
\newcommand{\be}{\mathbf{e}}
\newcommand{\bd}{\mathbf{d}}
\newcommand{\R}{\mathbb{R}}
\newcommand{\ch}{\operatorname{chld}}
\newcommand{\dom}{\operatorname{Dom}}
\newcommand{\spn}{\operatorname{span}}
\newcommand{\wt}{\widetilde}
\newcommand{\La}{\langle}
\newcommand{\Ra}{\rangle}
\newcommand{\N}{\mathbb{N}}
\newcommand{\cD}{\mathscr{D}}
\newcommand{\cL}{\mathcal{L}}
\newcommand{\bff}{\mathbf{f}}
\newcommand{\bg}{\mathbf{g}}
\newcommand{\bF}{\mathbf{F}}
\newcommand{\bG}{\mathbf{G}}
\newcommand{\bu}{\mathbf{u}}
\newcommand{\bv}{\mathbf{v}}
\newcommand{\sD}{{\scriptstyle\Delta}}
\newcommand{\BMOd}{\ensuremath\text{BMO}^{\textup d}}
\def\cyr{\fontencoding{OT2}\fontfamily{wncyr}\selectfont}
\DeclareTextFontCommand{\textcyr}{\cyr}
\newcommand{\sha}[0]{\ensuremath{\mathbb{S}%H\!\!S
}}
\newenvironment{entry}
{\begin{list}{X}%
  {%
      \setlength{\labelwidth}{55pt}%
      \setlength{\leftmargin}{\labelwidth}%\labelsep}%
      \addtolength{\leftmargin}{\labelsep}%
   }%
}%
{\end{list}}      
\renewcommand{\labelenumi}{(\roman{enumi})}
\newcounter{vremennyj}
\newcommand\cond[1]{\setcounter{vremennyj}{\theenumi}\setcounter{enumi}{#1}\labelenumi\setcounter{enumi}{\thevremennyj}}
\newcommand{\fdot}{\,\cdot\,}
\let\oldtocsection=\tocsection
\let\oldtocsubsection=\tocsubsection
\let\oldtocsubsubsection=\tocsubsubsection
\renewcommand{\tocsection}[2]{\hspace{0em}\oldtocsection{#1}{#2}}
\renewcommand{\tocsubsection}[2]{\hspace{1em}\oldtocsubsection{#1}{#2}}
\renewcommand{\tocsubsubsection}[2]{\hspace{2em}\oldtocsubsubsection{#1}{#2}}
\begin{document}

\title%[$A_2$ conjecture]
{Sharp $A_2$ estimates of Haar shifts via Bellman function}

\author{Sergei Treil}
\thanks{Supported  by the National Science Foundation under the grant  DMS-0800876. }
\address{Dept. of Mathematics, Brown University,   
151 Thayer
Str./Box 1917,      
 Providence, RI  02912, USA }
\email{treil@math.brown.edu}
\urladdr{http://www.math.brown.edu/\~{}treil}

\makeatletter
\@namedef{subjclassname@2010}{
  \textup{2010} Mathematics Subject Classification}
\makeatother

\subjclass[2010]{42B20, 42B35, 26B25, %47A30
60G42, 60G46}

% 42B	Harmonic analysis in several variables
% 42B20	Singular and oscillatory integrals (Calder?on-Zygmund, etc.)
% 42B35	Function spaces arising in harmonic analysis

% 47A	General theory of linear operators
% 47A30	Norms (inequalities, more than one norm, etc.)

%{30E20, 47B37, 47B40, 30D55.} 
%
% 30D55	$H^p$-classes (1980-2009)
% 30E20	Integration, integrals of Cauchy type, integral representations of analytic functions
%
% 47B   	Special classes of linear operators
% 47B37	Operators on special spaces (weighted shifts, operators on sequence spaces, etc.)
% 47B40	Spectral operators, decomposable operators, well-bounded operators, etc.

\keywords{\cz operators, $A_2$ weights, Haar shift, dyadic shift, Bellman function,
   non-homogeneous Harmonic Analysis, Harmonic Analysis on martingales}
\date{}

\begin{abstract}
We use  the  Bellman function method to give an elementary proof of a sharp weighted estimate for the Haar shifts, which is linear in the $A_2$ norm of the weight and  in the complexity of the shift.  Together with the representation of a general \cz operator as a weighted  average (over all dyadic lattices) of Haar shifts, cf.~\cite{H,HPTV-A2_2010}, it gives a significantly simpler proof of the so-called the $A_2$ conjecture. 

The main estimate (Lemma \ref{l-main_est}) is a very general fact about concave functions,  which can be very useful in other problems of martingale Harmonic Analysis. Concave functions of such type appear as the Bellman functions for bounds on the bilinear form of  martingale multipliers, thus the main estimate allows for the transference of the results for simplest possible martingale multipliers to more general martingale transforms.  

Note that (although this is not important for the $A_2$ conjecture for general \cz operators) this elementary proof gives the best known (linear) growth in the  complexity of the shift.
\end{abstract}

\maketitle

\setcounter{tocdepth}{1}
\tableofcontents

\section*{Notation}

\begin{entry}
\item[$\cD$] a dyadic lattice in $\R$ or $\R^d$;
\item[$\ch I$] the collection of children of the interval (cube) $I$;
\item[$\ch_k I$] the collection of children of the order $k$ of the interval (cube) $I$; the collection $\ch_0(I)$ consist of the interval $I$; 
\item[$|E|$] the $d$-dimensional Lebesgue measure of a set $E\subset \R^d$;
\item[$\La f \Ra \ci I$, $\fint_T f$] average of $f$ over $I$, $\La f \Ra \ci I = |I|^{-1} \int_I f(x) dx$;
\item[$E\ci I$] averaging operator, $E\ci I f := \La f\ci I \1\ci I$;
\item[$\Delta\ci I$] Martingale difference operator, $\Delta\ci I:= -E\ci I + \sum_{J\in\ch(I)} E\ci J$; 
\item[$L^2(w)$] the weighted $L^2$ space, $\| f\|\ci{L^2(w)}^2 = \int_{\R^n} |f(x)|^2 w(x) dx$. 
\end{entry}

\section{Introduction}
\label{intro}

The famous Hunt--Muckenhoupt--Wheeden theorem states that a \cz operator $T$ is bounded in the weighted space $L^2(w)=L^2(\R^d, w)$ if and only it the weight $w$ satisfies the so-called \emph{Muckenhoupt condition}
\begin{align*}
\label{A2}
\tag{$A_2$}
\sup_{Q} \left(|Q|^{-1} \int_Q w dx \right) \left(|Q|^{-1} \int_Q w^{-1} dx \right) =: [w]\ci{A_2} <\infty,
\end{align*}
where the supremum is taken over all cubes $Q$ in $\R^d$. The quantity $[w]\ci{A_2}$ is called the Muckenhoupt (or $A_2$) norm of the weight $w$ (although it is clearly not a norm). 

It has been an old problem to describe how the norm of a \cz operator in the weighted space $L^2(w)$ depends on the Muckenhoupt norm $[w]\ci{A_2}$ of $w$. A conjecture was that for a fixed \cz operator $T$ its norm in $L^2(w)$ is bounded by $C\cdot [w]\ci{A_2}$, where the constant $C$ depends on the operator $T$ (but not on the weight $w$). Simple counterexamples demonstrate that for the classical operators like Hilbert Transform or Riesz Transform, a better estimate than $C\cdot [w]\ci{A_2}$ is generally not possible. 

For operators that are not ``too singular'' better than linear estimates are possible. For example, for the convolution with the Poisson kernel $P_\tau$ its norm in $L^2(w)$ is estimated  by $C[w]\ci{A_2}^{1/2}$, i.e. 
\begin{align*}
\|P_\tau*f\|\ci{L^2(w)} \le C [w]\ci{A_2}^{1/2} \|f\|\ci{L^2(w)}. 
\end{align*}
This can be easily explained: the norm of the averaging operator $E\ci I$, $E\ci I f= \La f \Ra\ci I \1\ci I$ is exactly $\La w \Ra\ci I^{1/2} \La w^{-1} \Ra\ci I^{1/2}$, and the convolution with the Poisson kernel can be estimated by an average of the averaging operators. Also, it is not hard to show that this bound is sharp, meaning that for each weight $w$ convolution with some $P_\tau$ has the norm at least $c [w]\ci{A_2}^{1/2}$. 

But for the classical singular integral operators like Hilbert Transform or Riesz Transforms the linear in $[w]\ci{A_2}$ estimate is the best one can hope to get. The  conjecture about a linear in $[w]\ci{A_2}$  estimate of the norm for such and more general  operators has become known as the \emph{$A_2$ conjecture}. 

For the maximal function, the estimate  $C\cdot [w]\ci{A_2}$ was proved by S.~Buckley \cite{Buck1}: he also proved that this estimate is optimal for the maximal function. 
The first result for a ``singular integral'' operator was due to J.~Wittwer \cite{Wit}, who proved the  $A_2$ conjecture for the Haar multipliers. 

Using this result and Bellman function technique S.~Petermichcl and A.~Volberg \cite{PetmV} proved the $A_2$ conjecture for the Beurling--Ahlfors operator (convolution with $\pi^{-1} z^{-2}$ in $\C$). An alternative proof 
%of the $A_2$ conjecture 
was given a little later by O.~Dragi{\v{c}}evi{\'c} and A.~Volberg \cite{Dr-Volb-A2_Beurling_2003} via the representation of the Beurling--Ahlfors  Transform as an average of Haar multipliers over all dyadic lattices.

The result for other \cz operators, in particular for the Hilbert Transform (the convolution with $(\pi x)^{-1}$ on $\R$) remained open for some time. 

The next breakthrough was made by S.~Petermichl \cite{Petm1} who proved  the $A_2$ conjecture for the Hilbert Transform. She used the representation of the Hilbert Transform as the average over all (translated and dilated dyadic lattices) of a simple dyadic transformation (the so called Haar shift), and proved the $A_2$ conjecture for this operator. Later \cite{Petm2} she used similar ideas to prove the $A_2$ conjecture for the Riesz Transforms. 

Note that it took a lot of time and effort to go from J.~Wittwer's \cite{Wit} estimate for the simplest dyadic operator to the S.~Petermichl's  \cite{Petm1} estimate for the Haar shift, despite the fact that the Haar shift was just a little bit more complicated than the Haar multiplier. 

Lately, there was a lot of activity, that eventually lead to the complete solution of the $A_2$ conjecture for all \cz operators. Namely, M.~Lacey, S.~Petermichl and M.~Reguera \cite{LPR} proved the $A_2$ conjecture for all so-called Haar shifts, see Definitions \ref{df-Haar1}, \ref{df-Haar2} below, generalizations of  the operators used in \cite{Petm1}. This result immediately implies $A_2$ conjecture for all operators that can be represented as averages of Haar shifts of fixed \emph{complexity} (see Definition \ref{df-Haar1} below), in particular it gave another proof for Riesz transforms in $\R^d$. 

The technique used in \cite{LPR} was a very clever application of the stopping moment reasonings, and was based on the result from \cite{NTV5} about two-weight estimates for the so-called \emph{well-localized operators}. Unfortunately, the estimates of the norm in \cite{LPR} grew exponentially in the complexity of the Haar shift, so $A_2$ conjecture for general \cz operators remained open. 

For  general \cz operators the conjecture was finally settled by T.~Hyt\"{o}n\-en \cite{H}. 
One of the crucial components in his proof was the representation of an arbitrary \cz operator as a weighted  average (over all translated dyadic lattices) of the Haar shifts (of all possible complexities), where the weights decrease exponentially in the complexity%
\footnote{In fact, for general \cz operators one also should add the averages of so-called \emph{paraproducts}  
and their adjoint. But the paraproducts are the operators of fixed \emph{complexity}, 
so any known linear in $[w]\ci{A_2}$ 
estimate for such operators would work. We say more about estimates for paraproducts later in Section \ref{s-para}.}%
. 
This would imply the $A_2$ conjecture if one could prove the $A_2$ conjecture for the general Haar shifts with the estimates depending sub-exponentially (for example polynomially) on the complexity. 

This program was realized later in \cite{HPTV-A2_2010}, where the polynomial in complexity (and of course linear in $[w]\ci{A_2}$) estimate of the norm was obtained for general Haar shifts; a simpler representation of a \cz operator as a weighted average of the Haar shifts was also presented there. 

The original proof in \cite{H} used a result from a very technical paper \cite{PTV1}, where the $A_2$ conjecture was reduced to a Sawyer type testing condition and as a corollary to a weak type estimate. 

We should also mention as an interesting fact that all the proofs mentioned above used the results and/or technique from two papers by Nazarov--Treil--Volberg \cite{NTV-2w,NTV6}  about two weight estimates for martingale transforms

In the present paper we present a simple proof of the $A_2$ conjecture for arbitrary Haar shifts with \emph{linear} in complexity estimates. We get this estimate directly from the J.~Wittwer result \cite{Wit} ($A_2$ conjecture for the simplest Haar multipliers) using the Bellman function technique. 
Together with the representation of a \cz operator as a weighted average of Haar shifts from \cite{H} or \cite{HPTV-A2_2010} it gives the $A_2$ conjecture for general \cz operators. 

The proof is really simple and elementary; it is significantly simpler than any previous proof for the Haar shifts. 
It is really a shame that this proof was not discovered earlier. 

The essence of the proof is a simple result about convex functions. Functions of such type appear often when one uses Bellman function method in dyadic Harmonic analysis, so the main result can be used for the \emph{transference}, when one extends result obtained for the simplest dyadic model to more general martingale ones.  

Finally, we should also mention preprint \cite{RTV-2011} where the Bellman function method was used to prove the estimates for the Haar shifts of complexity $0$ and $1$ (in particular for the S.~Petermichl's Haar shifts \cite{Petm1}).

\section{Main objects.}

\subsection{Dyadic lattices}
\label{s:RDL}
The standard dyadic system in $\R^d$ is
\begin{equation*}
  \cD^{0}:=\bigcup_{k\in\Z}\cD^{0}_k,\qquad
  \cD^{0}_k:=\big\{ 2^{k}\big([0,1)^d+m\big):m\in\Z^d\big\}.
\end{equation*}
For $I\in\cD_k^0$ and a binary sequence $\om=(\om_j)_{j=-\infty}^{\infty}\in(\{0,1\}^d)^{\Z}$, let 
\begin{equation*}
  I\dot+\om:=I+\sum_{j<k}\om_j 2^{j}.
\end{equation*}
Following Nazarov, Treil and Volberg \cite[Section 9.1]{NTV5},  consider general dyadic systems of the form
\begin{equation*}
  \cD=\cD^{\om}:=\{I\dot+\om:I\in\cD^0\}
  =\bigcup_{k\in\Z}\cD^{\om}_k.
\end{equation*}
Given a cube $I=x+[0,\ell)^d$, let
\begin{equation*}
   \ch(I):=\{x+\eta\ell/2+[0,\ell/2)^d:\eta\in\{0,1\}^d\} 
\end{equation*}
denote the collection of dyadic children of $I$. Thus $\cD^{\om}_{k-1}=\bigcup\{\ch(I):I\in\cD^{\om}_k\}$. Note that, in line with \cite{NTV5,HPTV-A2_2010} but contrary to \cite{H}, we use the ``geometric'' indexing of cubes, where larger $k$ refers to larger cubes, rather than the ``probabilistic'' indexing, where larger $k$ would refer to finer sigma-algebras.

%Consider the standard probability measure on $\{0,1\}^d$, which 
%assigns equal probability $2^{-d}$ to every point. Define the 
%measure $\bP$ on $(\{0,1\}^d)^\Z$ as the corresponding product measure. 

\subsection{Martingale difference decompositions and Haar functions}

For a cube $I$ in $\R^d$ let 
\[
\E\ci I f := \left(\fint_I f dx\right) \1\ci I := \left(|I|^{-1}\int_I f dx\right) \1\ci I, \qquad \Delta\ci I :=-\E\ci I + \sum_{J\in \ch(I)} \E\ci{J} . 
\]
It is well known that for an arbitrary dyadic lattice $\cD$ every function $f\in L^2(\R^d)$ admits the orthogonal decomposition 
\[
f= \sum_{I\in\cD} \Delta\ci I f. 
\]

Given a cube $Q$ in $\R^d$, any function in the martingale difference space $\Delta\ci Q L^2$ is called a Haar function (corresponding to $Q$) and is usually denoted by  $h\ci Q$. Note, that here $h\ci Q$ denotes a \emph{generic} Haar function, not any particular one. 

In other words, a Haar function $h\ci Q$ is supported on $Q$, constant on the children of $Q$ and orthogonal to constants.

\subsection{Dyadic shifts}
\label{DS2w}

%\begin{df} An unweighted dyadic paraproduct is an operator $\Pi$ of the form
%\[
%\Pi f =\sum_{Q\in \cD} (\E\ci Q f )  h\ci Q\,,
%\]
%where $h\ci Q$ are some (non-weighted) Haar functions. 
%\end{df}

\begin{df}
\label{df-Haar1}
 Let $m, n\in \N$.  According to \cite{HPTV-A2_2010}   an elementary dyadic shift with parameters $m$, $n$ 
is an operator given by
\[
\sha f := \sum_{Q\in \cD}\sum_{\substack{ Q', Q''\in \cD,  Q', Q''\subset Q, \\ \ell(Q')=2^{-m}\ell(Q),\, \ell(Q'') =2^{-n}\ell(Q)}} |Q|^{-1}(f, h_{Q'}^{ Q''}) h_{Q''}^{Q'}
\]
where $h_{Q'}^{ Q''}$ and $h_{Q''}^{Q'}$ are (non-weighted) Haar functions for the cubes $Q'$ and $Q''$ respectively, 
subject to normalization
\begin{equation}
\label{norm1}
\|h_{Q'}^{ Q''}\|_\infty\cdot \|h_{Q''}^{Q'}\|_\infty \le 1.
\end{equation}
Notice that this implies, in particular, that
\begin{equation}
\label{sha1}
\sha f(x)=\sum_{Q\in \cD} |Q|^{-1} \int_Q a\ci Q(x,y)f(y) dy\,,\qquad \supp a\ci Q\subset Q\times Q, \ \|a\ci Q\|_{\infty}\le 1\,,
\end{equation}
where 
\begin{equation}
\label{sha-aQ}
a\ci Q (x,y) = \sum_{\substack{ Q', Q''\in \cD,  Q', Q''\subset Q, \\ \ell(Q')=2^{-m}\ell(Q),\, \ell(Q'')
=2^{-n}\ell(Q)}} h_{Q''}^{Q'}(x) h_{Q'}^{Q''}(y). 
\end{equation}
We will call the number $\max(m,n) +1$  the \emph{complexity} of the dyadic shift. Note that in \cite{H,HPTV-A2_2010} the complexity was defined as $\max(m,n)$; we use $\max(m,n) +1$, because it will be more convenient for our purposes. 
\end{df}

We will use a little more general definition of a Haar shift. 
\begin{df}
\label{df-Haar2}
A Haar shift $\sha$ of complexity $n$ is is given by 
\[
\sha f=\sum_{Q\in\cD} \sha\ci Q \Delta\ci Q^n f, 
\] 
where the operators $\sha\ci Q$  act on $\Delta^n\ci Q L^2$ and can be represented as integral operators with kernels $a\ci Q$, $\|a\ci Q\|_\infty\le |Q|^{-1}$. The latter means that for all $f, g\in \Delta\ci Q^n L^2$
\[
\La \sha \ci Q f, g\Ra = \int_Q a\ci Q(x, y) f (y) g(x) dx dy. 
\] 
\end{df}

We can always think that our dyadic shifts $\sha$ are {\it finite} dyadic shifts meaning that only finitely many $Q$'s are involved in its definition above. All estimates will be independent of this finite number.

\section{Bellman function for sharp weighted estimates of the dyadic martingale multipliers}

The simplest example of a Haar shift is the so-called dyadic martingale multiplier. Namely, let $\cD$ be the standard dyadic lattice in $\R$. For an interval $I\in\cD$ let $h\ci I$ be the standard $L^2$-normalized Haar function, $h\ci I := |I|^{-1/2} (\1\ci{I_1}-\1\ci{I_2})$, where $I_1$ and $I_2$ are the left and the right halves of $I$ respectively. Given a numerical sequence $\sigma = \{\sigma\ci I\}\ci{I\in \cD}$, $|\sigma\ci I|\le 1$,  define the operator $T_\sigma$ by 
\[
T_\sigma f = \sum_{I\in\cD} \sigma\ci I \La f, h\ci I\Ra h\ci I . 
\]
The family of operators $T_\sigma$ can be considered to be the simplest dyadic analog of singular integral operators, so the sharp weighted estimates for such operators were the natural thing to try before attacking the case of general \cz operators.

It was shown by J.~Wittwer \cite{Wit} that if a weight $w$ satisfies the dyadic Muckenhoupt condition
\[
\sup_{I\in\cD} \La w\Ra\ci I \La w^{-1}\Ra\ci I =: \Mdnorm w  <\infty 
\]
 the operators $T_\sigma$ are (uniformly in $\sigma$, $|\sigma\ci I|\le 1$) bounded in $L^2(w)$ by $C_1 \Mdnorm w$, where $C_1$ is an absolute constant.

Denoting by $I_1$ and $I_2$ he children of $I$ one can  rewrite the estimate of the norm as as 
\begin{align}
\label{eq-witt}
\sum_{I\in \cD} \left| \La f\Ra\ci{I_1} - \La f\Ra\ci{I_2} \right| \cdot \left| \La g\Ra\ci{I_1} - \La g\Ra\ci{I_2} \right|\cdot |I| \le C \Mdnorm w  \|f\|_{L^2(w)} \|g\|_{L^2(w^{-1})} ,  
\end{align}
for all $f\in L^2(w)$, $g\in L^2(w^{-1})$; here $C=C_1/4$. 

\subsection{Bellman function for the martingale multipliers and its properties}
\label{s-B_A}
Following the standard Bellman function technique, cf.~\cite{NT,NTV-name}, let us define the Bellman function for the problem. 
Let $A>1$, and let $I_1$ and $I_2$ denote the children of an interval $I$. Fix a dyadic interval $I_0$, for example $I_0=[0,1]$ and for real numbers $\bff, \bg, \bF, \bG, \bu, \bv$ satisfying
\begin{align}
\label{eq-domB}
	\bu, \bv > 0, \qquad 1 \le \bu\bv \le A, \qquad \bff^2 \le \bF \bv, \quad \bg^2 \le \bG \bu
\end{align}
define the function $\cB=\cB\ci A$ by
\[
\cB\ci A (\bff, \bg, \bF, \bG, \bu, \bv):= |I_0|^{-1}\sup \sum_{I\in \cD:I\subset I_0} \left| \La f\Ra\ci{I_1} - \La f\Ra\ci{I_2} \right| \cdot \left| \La g\Ra\ci{I_1} - \La g\Ra\ci{I_2} \right|\cdot |I|, 
\]
where the supremum is taken over all (real-valued) functions $f$, $g$ and the (dyadic) Muckenhoupt weights $w\ge 0$ on $I_0$,  such that 
\begin{align}
\label{eq-A_2^d-1}
 & \sup_{I\in\cD, I\subset I_0} \La w\Ra\ci I   \La w^{-1}\Ra\ci I   \le A ,
\\
\label{eq-av-fg}
	\La f\Ra\ci{I_0} &=\bff, \qquad \La f^2 w\Ra\ci{I_0} =\bF, \qquad
	\La g\Ra\ci{I_0} =\bg, \quad \La g^2 w^{-1}\Ra\ci{I_0} =\bG
	\\
\label{eq-av-uv}
	\La w \Ra\ci{I_0} & =\bu, \qquad \La w^{-1} \Ra\ci{I_0} = \bv. 
\end{align}
Here again $I_1$ $I_2$ are the children of the interval $I$. 

\subsubsection{Properties of \texorpdfstring{$\cB\ci A$}{B<sub>A}} 
\label{s-prop_b}
The Bellman function 
$\cB\ci A$ satisfies the following properties:
\begin{enumerate}
	\item Domain $\dom (\cB\ci A)$ of $\cB_A$ is given by \eqref{eq-domB}; this means that for every set of numbers $\bff$, $\bg$, $\bF$, $\bG$, $\bu$, $\bv$ satisfying \eqref{eq-domB} there are functions $f$, $g$ and a weight $w$ satisfying  \eqref{eq-av-fg}, \eqref{eq-av-uv}, so the supremum is well defined (not equal $-\infty$). It also mean, that if the variables are the corresponding averages, they must satisfy the constrains \eqref{eq-domB}. 
	
	\item Range: $0\le \cB\ci A (\bff, \bg, \bF, \bG, \bu, \bv) \le C A \bF^{1/2}\bG^{1/2}$;
	
	\item The main inequality: for any three $6$-tuples (let us call them $X=(\bff, \bg, \bF, \bG, \bu, \bv)$,  $X_1=(\bff_1, \bg_1, \bF_1, \bG_1, \bu_1, \bv_1)$, $X_2=(\bff_2, \bg_2, \bF_2, \bG_2, \bu_2, \bv_2)$) in the domain satisfying $X=(X_1 +X_2)/2$
	\[
	\cB\ci A (X) \ge  (\cB\ci A (X_1) + \cB\ci A(X_2))/2 + |\bff_1 - \bff_2|\cdot | \bg_1 - \bg_2|
	\]
\end{enumerate}
Note that because of correct homogeneity, the function $\cB\ti A$ does not depend on the choice of the interval $I_0$.

Let us explain the properties of the Bellman function $\cB\ci A$. The property \cond1 is easy to explain. Namely, for any weight $w$ and any interval $I$ 
\[
\La w \Ra\ci I \La w^{-1} \Ra\ci I \ge 1. 
\]
On the other hand the dyadic Muckenhoupt condition $\Atd$ means that $\La w \Ra\ci I \La w^{-1} \Ra\ci I \le A$ for any $I\in\cD$, so the inequalities $1\le\bu \bv \le A$ must be satisfied. On the other had, it is an easy exercise to show that for any $\bu\bv$ satisfying $1\le\bu\bv \le A$ one can find a dyadic  Muckenhoupt weight $w$ satisfying \eqref{eq-A_2^d-1} and \eqref{eq-av-uv}; one just can consider functions constant on the children of $I_0$. 

The inequality $\bff^2 \le \bF \bv$ is just the Cauchy--Schwartz inequality:
\begin{align*}
\left| \fint_I f dx \right| \le \left( \fint_I f^2 w dx\right)^{1/2} \left( \fint_I  w^{-1} dx\right)^{1/2}. 
\end{align*}
On the other hand, given a weight $w$ it is not hard to find a function $f$ satisfying \eqref{eq-av-fg}: we just put $f= \bu^{-1/2} \bff + \phi$, where $\int_{I_0} \phi w dx = 0$ and $\fint_{I_0} \phi^2 w = F - \bff^2 $. 

Property \cond2 is pretty straightforward: $\cB\ci A(X)\ge 0$ be the definition, and the inequality $\cB\ci A(X)\le C A\bF^{1/2}\bG^{1/2}$ follows from \eqref{eq-witt}. 

Let us now explain the main inequality \cond3. Let $X, X_1, X_2\in \dom(\cB\ci A)$, $X=(X_1+X_2)/2$, and let $I_1^0$ and $I_2^0$ be the children of the interval $I_0$. Consider functions $f$ and $g$ and a weight $w$ on $I_0$ such that  
\begin{align}
\label{eq-av-X}
X_{1,2}=(\La f \Ra\ci{I^0_{1,2}}, \La g \Ra\ci{I^0_{1,2}}, \La f^2 w \Ra\ci{I^0_{1,2}},  \La g^2 w^{-1}\Ra\ci{I^0_{1,2}},  \La w \Ra\ci{I^0_{1,2}},  \La w^{-1} \Ra\ci{I^0_{1,2}})
\end{align}
(we just construct the functions on the intervals $I^0_1$ and $I^0_2$ with the prescribed averages there: as we just discussed above such functions always exist). Then 
\[
X =(X_1+X_2)/2 = (\La f \Ra\ci{I_{0}}, \La g \Ra\ci{I_{0}}, \La f^2 w \Ra\ci{I_{0}},  \La g^2 w^{-1}\Ra\ci{I_{0}},  \La w \Ra\ci{I_{0}},  \La w^{-1} \Ra\ci{I_{0}})
\]
 is the vector of corresponding averages over $I_0$. Denoting by $I_1$ and $I_2$ the children of an interval $I$ we can write 
\begin{align*}
|I_0|^{-1} & 
\sum_{I\in\cD:\,I\subset I_0} \left| \La f\Ra\ci{I_1} - \La f\Ra\ci{I_2} \right| \cdot \left| \La g\Ra\ci{I_1} - \La g\Ra\ci{I_2} \right|\cdot |I|
\\
& =
|\bff_1-\bff_2|\cdot |\bg_1-\bg_2|
+ 
|I_0|^{-1} \sum_{I\in\cD:\,I\subsetneqq I_0} \left| \La f\Ra\ci{I_1} - \La f\Ra\ci{I_2} \right| \cdot \left| \La g\Ra\ci{I_1} - \La g\Ra\ci{I_2} \right|\cdot |I|
\end{align*}
Taking  the supremum over all $f$, $g$ and $w$ satisfying \eqref{eq-av-X} we get in the right side
\[
|\bff_1-\bff_2|\cdot |\bg_1-\bg_2|   + \left( \cB\ci A(X_1) + \cB\ci A(X_2)\right)/2. 
\]
The supremum in the left side is clearly bounded above by $\cB\ci A(X)$, which proves the main inequality \cond3. \hfill\qed
\begin{rem}
\label{r-non_conv}
Notice that the domain of $\cB\ci A$ is not convex, the reason being that the set $\{\bu, \bv>0: \bu\bv \le A\}$ is not convex. 

However, all the other constrains define the convex sets, so the non-convexity of the domain is completely given by the behavior of the coordinates $\bu$, $\bv$. Namely, if the points $X_+, X_-\in \R^6$ are in the domain, and for each $X=X_\theta= \theta X_+ + (1-\theta)X_-$, $0<\theta<1$, we know that $\bu\bv \le A$, then all $X_\theta$, $0<\theta <1$,  are in the domain. 
\end{rem}

\begin{rem}
The main inequality \cond3 implies that the functions $\cB\ci A$ are concave, namely that if the points $X_1$, $X_2$ and the whole interval $[X_1 , X_2] =\{(1-\theta) X_1 + \theta X_2: \theta \in [0,1]\}$ are in $\dom\cB\ci A$, then for all $\theta\in[0,1]$
\begin{align}
\label{eq-concavity}
	\cB\ci A ((1-\theta)X_1 + \theta X_2) \ge  (1-\theta) \cB\ci A (X_1) + \theta \cB\ci A(X_2). 
\end{align}

Condition \cond3 implies the so called \emph{midpoint concavity}, i.e.~\eqref{eq-concavity} with $\theta =1/2$.  But that is a well-known fact in convex analysis that for locally bounded concave (convex) functions midpoint concavity (convexity) is equivalent to the regular concavity (convexity), i.e.~to the inequality for all $\theta\in[0,1]$. 

The classical reference here would be the monograph \cite{Hardy-Inequalities-1952}, see Statement 111 in Section 3.18 where it was stated for convex functions. Note, that  only upper bound was assumed in \cite{Hardy-Inequalities-1952}, so for the equivalence of midpoint concavity and concavity one can only assume that a function is locally bounded below. 

Recall also, that any bounded above convex (respectively bounded below concave) function is continuous, and even locally Lipschitz, see \cite[Theorem 4.1.1]{Borwein_ConvexAn-2006}.

\end{rem}

\section{The main result: sharp weighted estimates of the Haar shifts}
 
The theorem below  is formally the main result of the paper. 

Let a dyadic lattice $\cD$ be fixed. Recall that a weight $w$ satisfies the dyadic Muckenhoupt condition $\Atd$ (with respect to the dyadic lattice $\cD$) if 
\begin{align*}
\sup_{I\in\cD} \La w\Ra\ci I \La w^{-1}\Ra\ci I =: \Mdnorm w  <\infty 
\end{align*}

\begin{thm}[Main result]
\label{t-main}
Let $\sha$ be a Haar shift in $\R^d$ 
%with respect to a dyadic lattice $\cD$ 
(in the sense of Definition \ref{df-Haar2}) of complexity $n$, and let a weight $w$ satisfies the dyadic Muckenhoupt condition $\Atd$. Then the norm of $\sha$ in $L^2(w)$ is at most $324 n 2^{2d-2} C \Mdnorm w$, where $C$ is the constant from \eqref{eq-witt}.  
\end{thm}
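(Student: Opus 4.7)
My plan is to combine a direct kernel estimate for the Haar shift with an iterated application of the main concavity inequality of the Bellman function $\cB\ci A$ constructed in Section~\ref{s-B_A}, thereby reducing the complexity-$n$ estimate to $n$ copies of a one-level Wittwer-type estimate that $\cB\ci A$ encodes.

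\textbf{Step~1 (Kernel bound and telescoping).} By duality it suffices to bound $|\La \sha f,g\Ra|$ for $f\in L^2(w)$ and $g\in L^2(w^{-1})$. From $\sha f = \sum_Q \sha\ci Q\Delta\ci Q^n f$ and $\|a\ci Q\|_\infty \le |Q|^{-1}$,
\[
|\La \sha f,g\Ra|\;\le\;\sum_{Q\in\cD}|Q|^{-1}\,\|\Delta\ci Q^n f\|_{L^1(Q)}\|\Delta\ci Q^n g\|_{L^1(Q)}.
\]
Since $\Delta\ci Q^n f$ is constant on every $I\in\ch_n(Q)$ with value $\La f\Ra\ci I-\La f\Ra\ci Q$, telescoping this difference along the chain $I=I_0\subsetneq\cdots\subsetneq I_n=Q$ and reindexing by intermediate cubes $R$ (with dyadic parent $R^*$) yields
\[
\|\Delta\ci Q^n f\|_{L^1(Q)}\;\le\;\sum_{\substack{R\subsetneq Q\\ \ell(R)\ge 2^{-n}\ell(Q)}}|R|\,\bigl|\La f\Ra\ci R-\La f\Ra\ci{R^*}\bigr|.
\]

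\textbf{Step~2 (One-level Bellman estimate).} For each $Q\in\cD$ form the 6-tuple $X\ci Q:=(\La f\Ra\ci Q,\La g\Ra\ci Q,\La f^2w\Ra\ci Q,\La g^2w^{-1}\Ra\ci Q,\La w\Ra\ci Q,\La w^{-1}\Ra\ci Q)\in\dom\cB\ci A$ with $A:=\Mdnorm w$. The midpoint concavity (property~(3) of Section~\ref{s-prop_b}), applied to a pairwise halving of the $2^d$ children of $Q$ carried out in $d$ successive stages, gives a $d$-dimensional one-step inequality
\[
|Q|\,\cB\ci A(X\ci Q)-\sum_{J\in\ch(Q)}|J|\,\cB\ci A(X\ci J)\;\ge\;c_d\sum_{J\in\ch(Q)}|J|\,\bigl|\La f\Ra\ci J-\La f\Ra\ci Q\bigr|\,\bigl|\La g\Ra\ci J-\La g\Ra\ci Q\bigr|,
\]
with $c_d^{-1}$ comparable to $2^{2d-2}$. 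Summing this over all $R\in\cD$ makes the $\cB\ci A$-values telescope, and the range bound $\cB\ci A\le CA\bF^{1/2}\bG^{1/2}$ at the top yields the one-level Wittwer-type estimate
\[
\sum_{R\in\cD}|R|\,\bigl|\La f\Ra\ci R-\La f\Ra\ci{R^*}\bigr|\,\bigl|\La g\Ra\ci R-\La g\Ra\ci{R^*}\bigr|\;\le\;2^{2d-2}CA\,\|f\|_{L^2(w)}\|g\|_{L^2(w^{-1})}.
\]
The same one-step inequality iterated $n$ times inside a single cube $Q$ also delivers the local variant bounding the internal sum $\sum_{R\subseteq Q,\,\text{depth}\le n-1}|R|B_R$ by $2^{2d-2}CA|Q|\bF_Q^{1/2}\bG_Q^{1/2}$, where $B_R$ denotes the level-1 Bellman bilinear at $R$.

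\textbf{Step~3 (From product to single sum via multiplicity).} The crux is to bound the per-cube quantity $|Q|^{-1}\|\Delta\ci Q^n f\|_{L^1(Q)}\|\Delta\ci Q^n g\|_{L^1(Q)}$ by a constant multiple of $\sum_{R\subseteq Q,\,\text{depth}\le n-1}|R|B_R$. Expanding the two telescoped $L^1$ sums produces diagonal terms that match $B_R$ together with cross-terms between different intermediate cubes; the bilinear/concave structure of $\cB\ci A$ (this is the abstract concavity lemma the author refers to as Lemma~\ref{l-main_est}) absorbs these cross-terms back into the diagonal Bellman bilinears up to a dimensional constant. Interchanging the outer sum over $Q$ with the inner sum over intermediate $R$, each pair $(R,R^*)$ is counted exactly $n$ times -- once for each of its $n$ dyadic ancestors at depth at most $n$ -- and combining with the global one-level bound of Step~2 produces the final estimate $|\La\sha f,g\Ra|\le 324\,n\,2^{2d-2}C\,\Mdnorm w\,\|f\|_{L^2(w)}\|g\|_{L^2(w^{-1})}$, with the numerical constant $324$ absorbing the Cauchy--Schwarz and pairing losses.

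\textbf{Main obstacle.} The principal difficulty is Step~3. A naive Cauchy--Schwarz applied separately to the two $L^1$ norms produces $\sqrt n$ in each factor and hence quadratic growth $n^2$ in the complexity; the linear dependence asserted by the theorem requires that the factor $n$ enter only through the multiplicity of intermediate cubes, which in turn forces the cross-terms in the product of telescoped sums to be re-absorbed into the diagonal one-level bilinears $B_R$. Establishing this absorption is the content of the abstract concavity lemma and is what distinguishes this argument from a purely elementary manipulation. A secondary obstacle is the $d$-dimensional extension of property~(3) in Step~2: because $\dom\cB\ci A$ is non-convex (Remark~\ref{r-non_conv}), the iterated pairwise splits of the $2^d$ children must be arranged so that every intermediate 6-tuple remains inside $\dom\cB\ci A$, and the coefficient $c_d\asymp 2^{-(2d-2)}$ must be tracked through the $d$-fold iteration.
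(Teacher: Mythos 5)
Your Steps~1--2 are sound and broadly parallel the opening of the paper's argument, but your Step~3 is where the real work happens and you have not actually supplied it; moreover, the mechanism you propose there does not match how the paper closes the argument, and I do not believe it can be made to work in the form you describe.

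The paper does not sum over all $Q\in\cD$ and then try to collect the factor $n$ as a multiplicity count. Instead it first \emph{slices} $\sha=\sum_{k=0}^{n-1}\sha_k$ by the residue class $\ell(Q)=2^{k+nj}$, so that within a single slice $\sha_k$ the operators $\Delta\ci Q^n$, $Q\in\cL_k$, act on disjoint martingale levels and the quantities $\cB\ci{A'}(X\ci Q)$ genuinely telescope when one sums over $Q\in\cL_k$. Each slice contributes a bound \emph{independent of its complexity}, and the factor $n$ enters only because there are $n$ slices. This is structurally different from your multiplicity argument, which instead relies on a cube $R$ appearing as an intermediate cube in the telescoped $L^1$ expansion for each of its $n$ ancestors. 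Also, the paper never telescopes $\|\Delta\ci Q^n f\|_1$ along chains; it simply writes $\|\Delta\ci Q^n f\|_1=\sum_{I\in\ch_n(Q)}|I|\,|\La f\Ra\ci I-\La f\Ra\ci Q|$ and feeds this directly into Lemma~\ref{l-main_est}. Your telescoping is an extra upper bound that manufactures the cross-term problem you then have to escape.

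Your Step~3 asserts that ``the bilinear/concave structure of $\cB\ci A$\dots absorbs these cross-terms back into the diagonal Bellman bilinears up to a dimensional constant,'' and identifies this with Lemma~\ref{l-main_est}. That is a misattribution. Lemma~\ref{l-main_est} does something different: it bounds the single product of \emph{averaged} increments $\bigl(2^{-n}\sum_{I\in\ch_n(I_0)}|\bff\ci I-\bff\ci{I_0}|\bigr)\bigl(2^{-n}\sum_{I\in\ch_n(I_0)}|\bg\ci I-\bg\ci{I_0}|\bigr)$ by the $n$-level Bellman drop $\cB\ci{A'}(X\ci{I_0})-2^{-n}\sum_{I\in\ch_n(I_0)}\cB\ci{A'}(X\ci I)$, with constant independent of $n$. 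It is not a statement about absorbing off-diagonal terms from a product of two telescoped sums into diagonal one-level bilinears $B_R$; no such absorption is performed or needed in the paper. Its proof is also not a formal consequence of concavity alone: it hinges on a genuinely new idea --- replacing the standard binary martingale by one whose first step jumps to two points $X^\pm$ chosen via a linear functional produced by Ball's solution to the plank problem (Theorem~\ref{t-plank} / Lemma~\ref{l-shift1}), then running a perturbed martingale with transition probabilities $\theta\ci I^\pm$ that still reach the same terminal points $X\ci I$ --- together with Lemma~\ref{l-dbl_domain} to control the resulting domain enlargement from $A$ to $4.5A$. None of this appears in your sketch. As written, Step~3 is an unproved assertion that a naively expanded product of telescoped sums can be dominated by the summed one-level bilinears up to a constant; I see no reason this should hold, and you yourself flag that a direct Cauchy--Schwarz gives $n^2$. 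To make the argument work you should drop the telescoping, slice $\sha$ by residue class as the paper does, and prove (or invoke) the $n$-independent depth-$n$ Bellman estimate of Lemma~\ref{l-main_est}, whose proof is the actual novelty of the paper.

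One smaller point: for the dimensional reduction the paper transfers the problem to $\R$ by arranging cubes along the line (Section~\ref{s-red_to_R}), paying the factor $2^{2(d-1)}$ in the $A_2$ constant, rather than iterating the midpoint inequality $d$ times inside each cube. Your version of Step~2 is morally equivalent, but if you iterate pairwise splits of the $2^d$ children you must check, as you note, that every intermediate $6$-tuple stays in $\dom\cB\ci A$; the paper sidesteps this by transferring the whole weight to $\R$ once and for all, which also conveniently puts each slice into the one-dimensional Haar framework where $\ch_n$ has exactly $2^n$ members.
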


\section{Preliminaries for the proof}

\subsection{Some simple reductions}

Let us simplify the problem a bit.
 
\subsubsection{First slicing} 

Let us recall that the Haar shift $\sha$ was represented as 
\[
\sha f(x)=\sum_{Q\in \cD} |Q|^{-1} \int_Q a\ci Q(x,y)f(y) dy\,,\qquad \supp a\ci Q\subset Q\times Q, \ \|a\ci Q\|_{\infty}\le 1\,,
\]
where $a\ci Q$ are given by \eqref{sha-aQ}. 

Let $n$ be the complexity of the shift $\sha$. We can decompose $\sha$ as the sum $\sha = \sum_{k=0}^{n-1} \sha_k$, where  for $0\le k \le n$ the operator $\sha_k= \sha_k^n$ is defined by taking the sum only over the cubes $Q$ of length $2^{k+ nj}$, $j\in\Z$. If we denote $\cL_k=\cL_k^n:= \{Q\in\cD:\ell(Q) = 2^{k+nj}, j\in\Z\}$, then we can write 
\begin{align*}
\sha_k f = \sum_{Q\in \cL_k} |Q|^{-1} \int_Q a\ci Q(x,y)f(y) dy
\end{align*}
The advantage of the operators $\sha_k$ is that they are martingale transforms  when we are moving $n$ units of time at once, so it is possible to apply Bellman function method. 

For the readers who are not comfortable thinking in terms of martingales we will just write the estimate 
\begin{align}
\label{split1-est}
|\La \sha_k f, g \Ra | \le \sum_{Q\in\cL_k} |Q|^{-1} \|\Delta^n\ci Q f\|_1 \|\Delta^n\ci Q g \|_1
\end{align}
and notice that the functions $\Delta^n\ci Q f$ and $\Delta^n\ci Q g$ are constant on all cubes $R\in \cL_k$, $R\subsetneqq Q$.

\subsubsection{Reduction to the real line}
\label{s-red_to_R}
 First of all it is sufficient to deal  only  with the Haar shifts on a dyadic lattice in $\R$ (we can assume that we are dealing with the standard dyadic lattice, but the proof for a general dyadic lattice is the same). 

\setlength{\unitlength}{1mm}
\begin{figure}

\begin{center}
\includegraphics{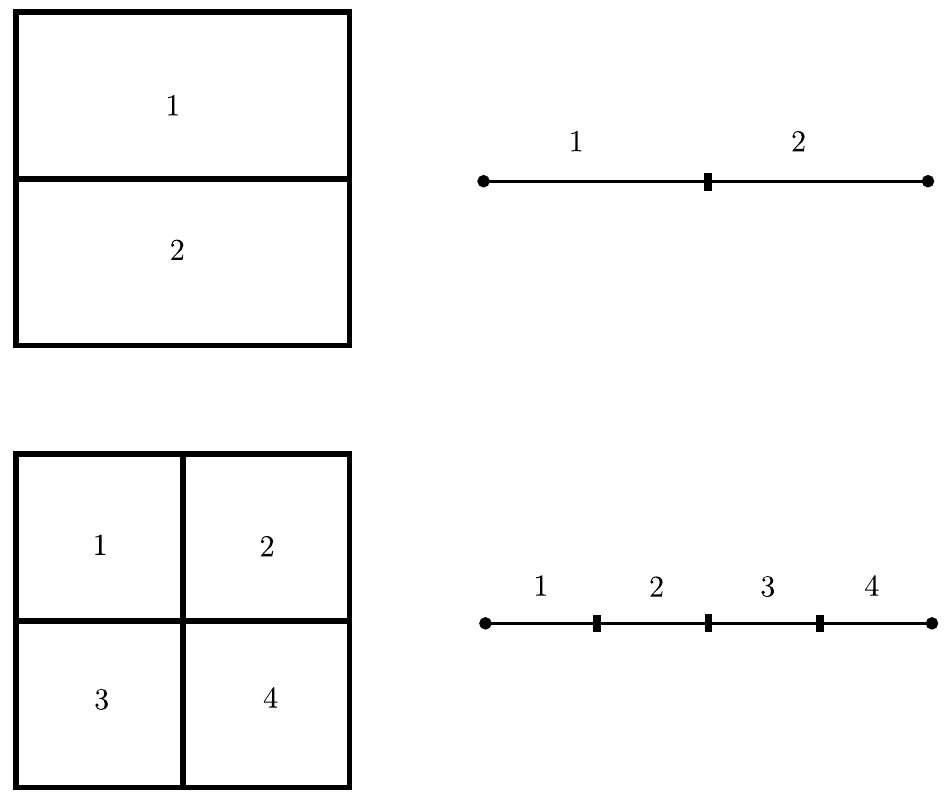}
\end{center}
\caption{Putting cubes on the line}%
{\protect\label{fig1}}%
\end{figure}

So for the slice $\sha_k$ of the Haar shift in $\R^d$ we make its  representation on the real line by ``arranging'' cubes along the real line.
% (i.e.~by using instead of a cube $Q$ an interval $I$, $|Q|=|I|$). 
Namely, for a dyadic cube $Q$ in $R^d$ take a dyadic  interval $I$, $|I|=|Q|$; this interval will correspond to the cube $Q$.  Dividing one side of $Q$ into $2$, split $Q$ into $2$ equal parallelepipeds: then pick a (one to one) correspondence between the parallelepipeds and the children of $I$ (the choice of the correspondence does not matter). 

Then by dividing  a longer side split each parallelepiped into two equal ones, and  and make a correspondence between the new parallelepipeds and the children of the corresponding intervals, see Fig.~\ref{fig1}. 

After $d$ divisions  we end up with the correspondence between the children of $Q$ and the intervals $J\in \ch_{d}(I)$. Note that the intervals $J\in\ch_k(I)$, $1\le k< d$ correspond to some ``almost children'' $R$ of $Q$. Here by an ``almost child'' we mean a parallelepiped some with some of the sides coinciding with the sides of $Q$ and the other sides being halves of the corresponding sides of $Q$. The construction for $d=1$ is presented on  Fig.~\ref{fig1}. 

One can run this construction up, i.e.~for $\wt I$ being the parent of $I$ and for $\wt Q$ being the grandparent of $Q$ of order $d$ we can construct, using the procedure described above,  a bijection $\Phi$ from the children and almost children of $\wt Q$ to the intervals $J\in \ch_k(\wt I)$, $1\le k<d$, such that $\Phi(Q)=I$. To assure  that $\Phi(Q)=I$ one just need at every division make the image of the almost child containing $Q$ to be the dyadic interval (of the appropriate length) containing $I$. 

A  function  $f\in L^1\ti{loc}(\R^d)$ will be transferred to a function $g\in L^{1}\ti{loc}(\R)$ such that $\La f\Ra\ci Q = \La g\Ra\ci I$, for all $Q, I$, $I=\Phi(Q)$. 

Let us see what is the ``price'' to pay for this reduction. First, if $\sha$ is a Haar shift (or its slice) of a complexity $n$ in $\R^d$, then  its model in $\R$ will be a shift of complexity $nd$. 

The (dyadic) $A_2$ norm of the weight on $\R$ is  $\sup_R \La w\Ra\ci R  \La w\Ra\ci R$, where the supremum is taken over all dyadic cubes and over all their almost children.  If $R$ is an almost child of $Q$, then
\[
\int_R w(x) dx \le \int_Q w(x) dx, \qquad \int_R w(x)^{-1} dx \le \int_Q w(x)^{-1} dx, 
\]
and $|R|\ge 2^{-d+1}|Q|$. So after the transfer to the real line, the Muckenhoupt norm $[w]\ci{A_2^\cD}$ of the weight increases at most $2^{2(d-1)}$ times. 

% is to replace the estimate of the $A_2$ norm of the weight from  
% $A$ to $2^{d-1}A$. Also, to the shift of complexity 
% $n$ in $\R^d$ the complexity of the model on $\R$ will be $dn$. 

\subsection{A technical lemma}

\begin{lm}
\label{l-dbl_domain}
Let $X, X_+, X_-\in \dom \cB\ci A$, $X= (X_+ +X_-)/2$. Then the interval $[X_-,X_+]$ belongs to $\dom \cB_{A'}$, where $A'=9A/8$, i.e.~for all $\theta \in (0,1)$ we have $X_\theta =\theta X_+ + (1-\theta) X_- \in\dom\cB\ci{A'}$. 
\end{lm}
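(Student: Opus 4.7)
The plan is to invoke Remark~\ref{r-non_conv}, which isolates the only non-convex constraint in the definition of $\dom\cB\ci A$ as the upper bound $\bu\bv\le A$. All the other constraints---$\bu,\bv>0$, $\bu\bv\ge 1$, $\bff^2\le\bF\bv$, and $\bg^2\le\bG\bu$---define convex sets (the lower bound $\bu\bv\ge 1$ is the epigraph of the convex function $1/\bu$ on $\{\bu>0\}$, and the quadratic inequalities are rotated Lorentz cones, hence convex). Since $X_\pm\in\dom\cB\ci A$ satisfy all these convex constraints, so does any convex combination $X_\theta=(1-\theta)X_-+\theta X_+$. The lemma therefore reduces to the single inequality $\bu_\theta\bv_\theta\le 9A/8$ for every $\theta\in[0,1]$.

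For this, I would expand $p(\theta):=\bu_\theta\bv_\theta$ in terms of $P_\pm:=\bu_\pm\bv_\pm$ and $S:=\bu_+\bv_-+\bu_-\bv_+$, obtaining
\[
p(\theta)=(1-\theta)^2 P_- + \theta^2 P_+ + \theta(1-\theta)\,S,
\]
and use the identity $4\bu\bv=(\bu_++\bu_-)(\bv_++\bv_-)=P_++P_-+S$ to rewrite the midpoint hypothesis $\bu\bv\le A$ as the bound $S\le 4A-P_+-P_-$. Substituting this into the formula and regrouping yields
\[
p(\theta)\le P_-(1-\theta)(1-2\theta)+P_+\theta(2\theta-1)+4A\theta(1-\theta).
\]

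By the symmetry $(X_+,X_-,\theta)\leftrightarrow(X_-,X_+,1-\theta)$ one may restrict to $\theta\in[1/2,1]$. In this range the coefficient of $P_-$ is non-positive (so I drop that term) while the coefficient of $P_+$ is non-negative (so I apply $P_+\le A$), collapsing the bound to $p(\theta)\le A\theta(3-2\theta)$. This parabola attains its maximum on $[1/2,1]$ at $\theta=3/4$, with value $9A/8$, which is exactly the target constant.

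The main obstacle is really the calibration of the constant rather than any serious inequality: $A'=9A/8$ is nothing more than $A\cdot\max_{\theta\in[0,1]}\theta(3-2\theta)$, and any argument that does not exploit the midpoint constraint $\bu\bv\le A$ allows $S$ to be as large as $4A$, yielding at best a bound of $3A/2$. Using all three hypotheses $P_\pm\le A$ and $\bu\bv\le A$ simultaneously is what pins down $9A/8$; in fact this constant is essentially sharp, since the true supremum of $p(\theta)$ over feasible configurations equals $(9A-1)/8$, attained near $\theta=3/4$ with $P_-=1$ and $P_+=A$.
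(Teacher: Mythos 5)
Your proof is correct, and it is a genuinely more rigorous and explicit route than the paper's. Both arguments agree on the first step: invoke Remark~\ref{r-non_conv} to dispose of every convex constraint (the $\bu\bv\ge 1$ hyperbolic region and the rotated Lorentz cones $\bff^2\le\bF\bv$, $\bg^2\le\bG\bu$ are convex, as you say), leaving only $\bu\bv\le A'$ to check. The paper then rescales to a two-dimensional picture and asserts that ``the worst case scenario is when the center and one endpoint lie on the hyperbola $y=1/x$ and the other endpoint lies on a coordinate axis,'' without proof, leaving the verification of the $9/8$ to the reader. Your argument replaces this heuristic extremal-configuration claim by a short algebraic computation: expand $p(\theta)=\bu_\theta\bv_\theta$ as a quadratic in $\theta$, eliminate the cross term $S=\bu_+\bv_-+\bu_-\bv_+$ via the midpoint identity $4\bu\bv=P_++P_-+S$, then use the sign of the resulting coefficients of $P_\pm$ on $[1/2,1]$ together with $P_\pm\le A$ to reduce to maximizing $\theta(3-2\theta)$, which gives $9/8$ at $\theta=3/4$. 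This is cleaner and self-contained: the constant falls out of the calculation rather than a geometric sketch. One small observation worth making precise if you write this up: the paper's geometric picture (endpoint on a coordinate axis) implicitly drops the constraint $\bu\bv\ge 1$, so it also yields $9A/8$; your sharper figure $(9A-1)/8$ (with $P_-=1$, $P_+=A$, $S=3A-1$, which one can check is realizable by solving $t^2-(3A-1)t+A=0$) is the sup when that lower bound is retained, with the maximum attained exactly at $\theta=3/4$, not merely ``near'' it. Either way the lemma as stated is proved.
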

\begin{proof}
As it was discussed above in Remark \ref{r-non_conv}, we only need to check the non-convex constrain $\bu\bv \le A$. Because of scaling, it is sufficient to show that if end points and a center of an interval are in the set $\{(x,y)\in\R^2: x, y\ge 0, xy\le 1\}$, then for any point on the interval $xy\le9/8$. 

It is no hard to see that the worst case scenario is when center and one endpoint is on the line $y=1/x$ (and the other endpoint is on a coordinate axis). Since the rescaling $x\mapsto \alpha x$, $y\mapsto \alpha^{-1} y$, $\alpha>0$ does not change the domains $xy\le C$, we can assume without loss of generality that the center of the interval is at the point $(1,1)$. Then the picture for worst case scenario will be as on Fig.~\ref{fig2}, or its reflection in the line $y=x$.

\begin{figure}
\begin{center}
\includegraphics{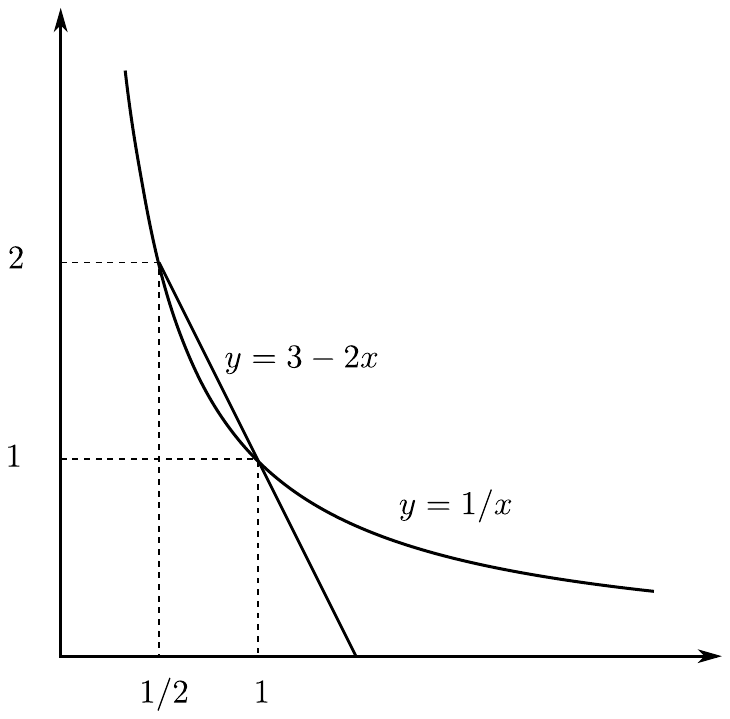}
\end{center}
\caption{Proof of Lemma \ref{l-dbl_domain}}%
{\protect\label{fig2}}%
\end{figure}

It is an easy exercise  to find that the maximal value of $xy$ on this interval is $9/8$. 
\end{proof}

\section{The main estimate}
The following lemma is in the heart of the matter. In fact, it can be considered as the main result of the paper.
\begin{lm}[The main estimate]
\label{l-main_est}
Let $\cB_A$, $A>1$ be a family of functions satisfying conditions \cond1, \cond2, \cond3 from Section \ref{s-B_A}.  
Let $I_0$ be a dyadic interval, and let for all $I\in\ch_k(I_0) $, $0\le k\le n$, the points $X\ci I = (\bff\ci I, \bg\ci I, \bF\ci I, \bG\ci I, \bu\ci I, \bv\ci I) \in \dom \cB\ci A$ be given. 
Assume that $X\ci I$ satisfy the dyadic martingale dynamics, namely that if $I_1$, $I_2$ are the children of $I$, then 
\begin{align*}
X\ci I = \left(X\ci{I_1} + X\ci{I_2} \right)/2. 
\end{align*}
Then for $A'=4.5A$ we have 
\begin{align*}
\left(2^{-n} \sum_{I\in \ch_n(I_0)} | \bff\ci I -\bff\ci{I_0} | \right)
&\left(2^{-n} \sum_{I\in \ch_n(I_0)} | \bg\ci I -\bg\ci{I_0} | \right)
\\ &
\le
72 \left( \cB\ci{A'} (X\ci{I_0}) - 2^{-n} \sum_{I\in \ch_n(I_0)} \cB\ci{A'} (X\ci I) \right). 
\end{align*}
%%
%where $C$ is an absolute constant. 
\end{lm}

\subsection{Plan of the proof} Let us first explain the idea of the proof informally. We will use the language of random processes (martingales) as the most convenient for the informal explanation, but the formal proof will be completely elementary. 

So, we a given a dyadic martingale $X\ci I$ $I\in\cD$, where at each point we have 2 choices with equal probability,  and we want to estimate below the difference between $\cB\ci{A'}(X_I)$ at the initial moment $I=I_0$ and its expected value after $n$ steps. 

An immediate idea would be to use the property \cond3 of the Bellman function to estimate expected loss at each step. But this would not work: the conditional expectation of the loss when going from the state $X\ci I$ to the states $X\ci{I_1}$, $X\ci{I_2}$, where $I_{1,2}$ are the children of $I$  is estimated below by $c|\bff\ci{I_1} -\bff\ci{I}| \cdot |\bg\ci{I_1} -\bg\ci{I}|$, and this quantity can be small, it even can be zero: imagine a situation when at each step only one of the variables $\bff$ or $\bg$ is changed. 

So, the main idea is to change the martingale, preserving the starting point $X\ci{I_0}$ and the final values and  probabilities. 

Namely, from the starting point $X\ci{I_0}$ we move with probabilities $1/2$ to the points $X^+$, and $X^-$, $X^{\pm}=X^\pm\ci{I_0}=(\bff^{\pm}, \bg^{\pm}, \bF^{\pm}, \bG^{\pm}, \bu^{\pm}, \bv^{\pm})$, $X\ci{I_0} =(X^+ + X^-)/2$. We will move ``sufficiently far'', so  $|\bff^\pm -\bff\ci{I_0}|$ and $|\bg^\pm - \bg\ci{I_0}|$ are comparable to  $2^{-n} \sum_{I\in \ch_n(I_0)} | \bff\ci I -\bff\ci{I_0} |$ and $2^{-n} \sum_{I\in \ch_n(I_0)} | \bg\ci I -\bg\ci{I_0} |$ respectively, but ``not too far'', so the points $X^\pm$ will be ``almost averages'' of the points $X\ci I$, $I\in\ch_n(I_0)$.  
% after $n$ steps. 

Since we moved sufficiently far,  we get the amount we need at the first step. 
Moreover, since    the points $X^\pm$ are ``almost averages'' of the points $X\ci{I}$, $I\in \ch_n(I_0)$, we can start the process from the points $X^\pm$ to end up after $n$ steps at the points $X\ci I$, $I\in\ch_n(I_0)$. 

Of course, it will not be possible to get from $X^+$ (or $X^-$) to the end points $X\ci I$, $I\in\ch_n(I_0)$ via the standard dyadic martingale, when at each point we have 2 choices with equal probability. But since the first step was sufficiently small, it will still be possible to get to the desired endpoints via a binary martingale, where at each point we have 2 choices with not equal but with \emph{almost} equal probability.  

It might be intuitively clear from the symmetry of $X^\pm$, that after moving from $X\ci{I_0}$ to $X^\pm$ it is possible to start the martingale from $X^\pm$ so we get from $X\ci{I_0}$ to the endpoints with equal probability.  More precisely, we will get from $X^+$ to $X\ci I$, $I\in\ch_n(I_0)$ with probability $2^{-n} (1+\alpha\ci I)$, $|\alpha\ci I|\le 1/3$, and we get from $I^-$ to the same point $I$ with the probability $2^{-n}(1-\alpha\ci I)$, so the probability of getting from $I_0$ to $I$ will be  $2^{-n}$.  

If this reasoning is not intuitively clear to the reader, he or she should not worry, because the rigorous proof (not requiring any probability) will be presented later; the probabilistic interpretation will guide us through the formal calculations.  

\subsection{Formal proof: another technical lemma}

Let us recall a theorem by K.~Ball \cite[Theorem 7]{Ball_PlankProblem-1991}, solving the famous Tarski \emph{plank problem} for convex bodies. This theorem also can be treated as a ``multiple Hahn--Banach Theorem''. 

\begin{thm}
\label{t-plank}
Let $x_k$ be unit vectors ($\|x_k\|=1$) in a (real) normed space $\cX$, and let $m_k\in \R$, $w_k\ge 0$ such that 
\[
\sum_{k} w_k =1. 
\]
Then there exist a functional $x^*\in\cX^*$, $\|x^*\|\le 1$ such that 
\[
\left| \La x_k, x^*\Ra - m_k \right| \ge w_k \qquad \text{for all } k. 
\]
\end{thm}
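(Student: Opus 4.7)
I rephrase the conclusion as: the open ``planks''
\[
P_k^\circ := \{y^* \in B_{\cX^*} : |\langle x_k, y^*\rangle - m_k| < w_k\}
\]
do not cover the unit ball $B_{\cX^*}$. I argue by contradiction, assuming they do.

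\textbf{Step 1 (finite-dimensional reduction).} By Banach--Alaoglu, $B_{\cX^*}$ is weak-$*$ compact, and each $P_k^\circ$ is weak-$*$ open, so a finite subcollection already covers. Keeping only those indices and restricting $\cX^*$ to the finite-dimensional quotient dual to $\spn\{x_k\}$, we may assume $\dim \cX < \infty$ and that only finitely many $x_k, m_k, w_k$ appear, with $\sum w_k \le 1$.

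\textbf{Step 2 (Ball's symmetrization).} The next step is to reduce the shifted (non-symmetric) plank problem to a centrally symmetric one. Form $\mathcal{Y} := \cX \oplus \R$ with an auxiliary norm (a variant of $\|(x,t)\| = \max(\|x\|_\cX, |t|)$), and consider the vectors $\tilde x_k := (x_k, -m_k)/\|(x_k, -m_k)\|_{\mathcal{Y}}$, which are unit in $\mathcal{Y}$; for any $(y^*, s) \in \mathcal{Y}^*$,
\[
\langle \tilde x_k, (y^*, s)\rangle \;=\; \frac{\langle x_k, y^*\rangle - m_k s}{\|(x_k, -m_k)\|_{\mathcal{Y}}}.
\]
With a judicious choice of norm on $\mathcal{Y}$ and a normalization forcing $s = 1$, a dual functional in $B_{\mathcal{Y}^*}$ avoiding the \emph{symmetric} planks $\{|\langle \tilde x_k, \cdot\rangle| < \tilde w_k\}$ (with rescaled weights $\tilde w_k := w_k/\|(x_k, -m_k)\|_{\mathcal{Y}}$ summing to at most $1$, since $\|(x_k, -m_k)\|_{\mathcal{Y}} \ge \|x_k\|_\cX = 1$) produces exactly the desired $x^* \in B_{\cX^*}$.

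\textbf{Step 3 (symmetric plank theorem).} It remains to prove the symmetric statement: for unit vectors $\tilde x_k$ in a real normed space with $\sum \tilde w_k \le 1$, there is $z^*$ in the dual unit ball with $|\langle \tilde x_k, z^*\rangle| \ge \tilde w_k$ for every $k$. I would follow Bang's selection argument: by Hahn--Banach, pick $y_k^* \in B_{\mathcal{Y}^*}$ with $\langle \tilde x_k, y_k^*\rangle = 1$, set $z^* := \sum_k \varepsilon_k \tilde w_k y_k^*$, and choose signs $\varepsilon_k \in \{\pm 1\}$ greedily, starting from the largest $\tilde w_k$, so that in each pairing $\varepsilon_j \langle \tilde x_j, z^*\rangle$ the diagonal term $\varepsilon_j \tilde w_j$ reinforces rather than cancels the off-diagonal contributions. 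The budget $\sum \tilde w_k \le 1$ keeps $\|z^*\|_{\mathcal{Y}^*} \le 1$.

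\textbf{Main obstacle.} Step 2 is the crux. Naively rescaling the second coordinate $s \to 1$ blows up $\|y^*\|_{\cX^*}$, so the norm on $\mathcal{Y}$ and the enforcement of the normalization must be arranged with considerable care to preserve both the unit-ball constraint and the weight budget $\sum w_k \le 1$. Carrying out this bookkeeping is exactly the geometric content of Ball's construction in \cite[Theorem 7]{Ball_PlankProblem-1991}, and I expect it to absorb most of the effort; Steps 1 and 3 are comparatively routine.
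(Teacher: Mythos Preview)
The paper does not prove Theorem~\ref{t-plank}; it is simply quoted from Ball's paper \cite[Theorem~7]{Ball_PlankProblem-1991} and then only the trivial two-vector case (Lemma~\ref{l-shift1}) is used. So there is no ``paper's own proof'' to compare against---your proposal is effectively an outline of Ball's argument itself.

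That said, your Step~3 contains a genuine gap. With $z^* = \sum_k \varepsilon_k \tilde w_k y_k^*$ you need
\[
\Bigl|\sum_k \varepsilon_k \tilde w_k \langle \tilde x_j, y_k^*\rangle\Bigr| \ge \tilde w_j \qquad\text{for every } j,
\]
where the matrix $a_{jk} = \langle \tilde x_j, y_k^*\rangle$ has $a_{jj}=1$ but is \emph{not symmetric}. A greedy choice of signs cannot work: when you fix $\varepsilon_j$ you perturb every row, not just the $j$th, and there is no monotonicity to exploit. Bang's lemma (choose signs maximizing $\|\sum \varepsilon_k \tilde w_k u_k\|^2$) gives the result when the matrix is symmetric positive semidefinite---i.e.\ in the Hilbert-space case---and this is precisely what Ball uses (his Lemma~3). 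The passage to general normed spaces is not a greedy refinement of this but a separate argument; in Ball's paper the non-symmetric, shifted statement (his Theorem~7) is obtained from the symmetric Hilbert-space lemma by a careful construction, not by the product-space reduction you sketch in Step~2 followed by a Banach-space version of Bang. In short: Steps~1 is fine, Step~2 is roughly the right idea but the details are the whole proof, and Step~3 as written does not go through.
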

Note that the number of vectors here can be infinite. 

This theorem with $2$ vectors, $m_k=0$, $w_1=w_2=1/2$ gives as the following simple lemma that we will use.

\begin{lm}
\label{l-shift1}
Let $\cX$ be a (real) normed space, and let $a, b\in\cX$, $\|a\|=\|b\|=1$. There exists $x^*\in\cX^*$, $\|x^*\|=1$ such that
\[
| \La a, x^*\Ra| \ge 1/2, \qquad | \La b, x^*\Ra| \ge 1/2. 
\]
\end{lm}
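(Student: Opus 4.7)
The plan is to apply Ball's plank theorem (Theorem \ref{t-plank}) directly, with two vectors $x_1 = a$ and $x_2 = b$, targets $m_1 = m_2 = 0$, and weights $w_1 = w_2 = 1/2$. Since $w_1 + w_2 = 1$ and $\|a\| = \|b\| = 1$, all the hypotheses of Theorem \ref{t-plank} are met. The theorem then produces a functional $x^* \in \cX^*$ with $\|x^*\| \le 1$ such that $|\langle a, x^*\rangle| \ge 1/2$ and $|\langle b, x^*\rangle| \ge 1/2$, which are exactly the inequalities claimed.

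The only remaining point is to upgrade $\|x^*\| \le 1$ to $\|x^*\| = 1$ as stated. Since $\|a\| = 1$, we have $\|x^*\| \ge |\langle a, x^*\rangle| \ge 1/2 > 0$, so $x^*$ is nonzero and we may replace it by $\widetilde{x^*} := x^*/\|x^*\|$. The new functional has norm exactly one, and since $\|x^*\| \le 1$, dividing by $\|x^*\|$ only increases the absolute values $|\langle a, \widetilde{x^*}\rangle|$ and $|\langle b, \widetilde{x^*}\rangle|$, so the two inequalities are preserved (and in fact strengthened).

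There isn't really a main obstacle here, since the lemma has been deliberately set up as an immediate specialization of the plank theorem. An alternative route, avoiding the infinite-dimensional formulation, would be to restrict to the (at most) two-dimensional subspace $V = \spn(a,b)$: the dual unit ball in $V^*$ is a symmetric convex body, and the planks $\{v^*: |\langle a, v^*\rangle| < 1/2\}$ and $\{v^*: |\langle b, v^*\rangle| < 1/2\}$ have relative widths summing to $1$, so they cannot cover all of $\partial B^*_V$; a boundary point outside both planks yields $v^* \in V^*$ with the required properties, and one extends to $\cX$ by Hahn--Banach without increasing the norm. This is slightly more elementary but essentially reproves Ball's theorem in a trivial case, so invoking Theorem \ref{t-plank} is the cleaner route and is in keeping with the framing of the paper.
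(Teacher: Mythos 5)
Your proposal is exactly the paper's route: the paper states that Lemma \ref{l-shift1} is obtained from Theorem \ref{t-plank} with two vectors, $m_k=0$, and $w_1=w_2=1/2$, which is precisely your specialization. Your extra normalization step to upgrade $\|x^*\|\le 1$ to $\|x^*\|=1$ is a correct and harmless tidy-up that the paper leaves implicit.
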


Note, that for our purposes we just need this lemma with some constant, not necessarily with the optimal one $1/2$; we just get a worse constant in Theorem \ref{t-main}. A proof of this lemma with some constant is an easy exercise in elementary functional analysis.

\subsection{Formal proof: the first step}
Let $N=2^n$, and let $\ell^p_N$ be the space $\R^N$ %(or $\C^N$) 
endowed with the $\ell^p$ norm. Define $\be\in \ell^p_N$, $\be = (1, 1, \ldots, 1)$. 
Consider the quotient space $\cX = \ell^1_N/\spn\{\be\}$. For $x\in \ell^1_N$ let $x^0:= x- N^{-1} \La x, \be \Ra \be$. Then
\begin{align}
\label{eq-QuotNorm}
\|x\|\ci{\cX} \le \|x^0\|_{\ell^1_N} \le 2\|x\|\ci{\cX}.	
\end{align}
Indeed, the first inequality is trivial (follows from the definition of the norm in the quotient space). As for the second one, $|\La x, \be\Ra| \le  \|x\|_{\ell^1_N}$, $\|\be\|_{\ell^1_N} =N$, so it follows from the triangle inequality that
\[
\|x^0\|_{\ell^1_N} \le \|x\|_{\ell^1_N} + N^{-1} |\La, x, \be \Ra | \|e\|_{\ell^1_N} \le 2 \|x\|_{\ell^1_N}. 
\] 
This inequality remains true if one replaces $x$ by $x-\alpha\be$, $\alpha\in\R$, so the second inequality in \eqref{eq-QuotNorm} is proved. 

The dual space $\cX^*$ can be identified with s subspace of $\ell^\infty_N$ consisting of $x^*\in \ell^\infty_N$ such that $\La \be, x^*\Ra =0$ (with the usual $\ell^\infty_N$-norm).

%It follows from \eqref{eq-QuotNorm} that 

Applying Lemma \ref{l-shift1} to the above space $\cX$ with $a$ and $b$ being the normalized vectors $\{\bff\ci I-\bff\ci{I_0}\}\ci{I\in\ch_n(I_0)}$ and $\{\bg\ci I-\bg\ci{I_0}\}\ci{I\in\ch_n(I_0)}$ respectively, we get that there exists $x^*=\{\alpha\ci I\}_{I\in \ch(I_0)}$ such that 
\begin{align}
\label{sum0}
&\sum_{I\in\ch_n(I_0)} \alpha\ci I   =0,  \\
\label{bd_alpha_I}
 &|\alpha\ci I| \le 1/3 \qquad \forall I\in\ch_n(I_0), 
\end{align}
and 
\begin{align}
\label{move-f}
\left| \sum_{I\in\ch_n(I_0)} \alpha\ci I ( \bff\ci I-\bff\ci{I_0}) \right| & \ge \frac1{12} \sum_{I\in\ch_n(I_0)} |\bff\ci I-\bff\ci{I_0}|, 
\\
\label{move-g}
\left| \sum_{I\in\ch_n(I_0)} \alpha\ci I ( \bg\ci I-\bg\ci{I_0}) \right| & \ge \frac1{12} \sum_{I\in\ch_n(I_0)} |\bg\ci I-\bg\ci{I_0}| . 
\end{align}
Note that Lemma \ref{l-shift1} gives the comparison with the norm in $\cX=\ell^1_N/\spn{\be}$ with the constant $1/6$, and the second inequality in \eqref{eq-QuotNorm} give as an extra factor $1/2$.

Below, all identities (inequalities) with $\pm$ mean that there are two sets of identities (inequalities): one with $+$ and the other with $-$. 

Define  $X^{\pm}=X^\pm\ci{I_0}=(\bff^{\pm}, \bg^{\pm}, \bF^{\pm}, \bG^{\pm}, \bu^{\pm}, \bv^{\pm})$ by
\begin{align*}
X^\pm =2^{-n} \sum_{I\in\ch_n(I_0)} (1 \pm \alpha\ci I) X\ci I
=X\ci{I_0} \pm 2^{-n} \sum_{I\in\ch_n(I_0)}  \alpha\ci I X\ci I
,
%\qquad 
%X^2 =2^{-n} \sum_{I\in\ch_n(I_0)} (1+\alpha\ci I) X\ci I,  
\end{align*}
so $X\ci{I_0} = (X^+ +X^-)/2$. 

Note that \eqref{move-f}, \eqref{move-g} and the identities 
\[
\bff\ci{I_0} = 2^{-n}  \sum_{I\in\ch_n(I_0)} \bff\ci I, \qquad 
\bg\ci{I_0} = 2^{-n}  \sum_{I\in\ch_n(I_0)} \bg\ci I
\]
imply that 
\begin{align*}
|\bff^\pm - \bff\ci{I_0}| 
%&= |\bff^2-\bff\ci{I_0}| 
&\ge \frac{1}{12} 2^{-n} \sum_{I\in\ch_n(I_0)} |\bff\ci I-\bff\ci{I_0}|, 
\\
|\bg^\pm - \bg\ci{I_0}| 
%&= |\bg^2-\bg\ci{I_0}| 
&\ge \frac{1}{12} 2^{-n} \sum_{I\in\ch_n(I_0)} |\bg\ci I-\bg\ci{I_0}|.
\end{align*}
Let
\[
x\ci I^\pm := 1\pm\alpha\ci I;
\]
note that $2/3\le x\ci I \le 4/3$. 

\subsection{Formal proof: modifying the martingale} For $I\in\ch_k(I_0)$, $1\le k <n$ define
\begin{align}
\label{X_I^pm}
X\ci{I}^\pm := \left(\sum_{\substack{J\in \ch_n(I_0): \\J\subset I}} x\ci J^\pm   X\ci J \right)
\div
\left(\sum_{\substack{J\in \ch_n(I_0): \\J\subset I}} x\ci J^\pm   \right). 
\end{align}
Note, that for $I\in\ch_n(I_0)$ we have $X^+\ci I = X^-\ci I = X\ci I$, where $X\ci I$ are the points given in the statement of the lemma.

For $I\in\ch(K)$ define 
\begin{align}
\label{theta_I^pm}
\theta\ci{I}^\pm := \left(\sum_{\substack{J\in \ch_n(I_0): \\J\subset I}} x\ci J^\pm    \right)
\div
\left(\sum_{\substack{J\in \ch_n(I_0): \\J\subset K}} x\ci J^\pm   \right)
\end{align}
Note that $\theta\ci I^\pm \ge 0$, and it $I_1$, $I_2$ are the children of $I$, then 
\begin{align}
\label{midpoint}
\theta\ci{I_1}^\pm +\theta\ci{I_2}^\pm  =1, \qquad \theta\ci{I_1}^\pm X^\pm\ci{I_1}+ \theta\ci{I_2}^\pm X^\pm\ci{I_2} =  X^\pm\ci I, 
\end{align}
i.e.~$X^+_I$ (respectively $X\ci{I}^-$) is in the interval connecting $X\ci{I_1}^+$ and $X_{I_2}^+$ (respectively $X\ci{I_1}^-$ and $X\ci{I_2}^-$). From the probabilistic point of view, $\theta^\pm\ci{I_1}$ and $\theta^\pm\ci{I_2}$ are the probabilities of moving from $X^\pm\ci I$ to $X^\pm\ci{I_1}$ and $X\ci{I_2}$ respectively. 

It is not hard to show that $\theta^\pm\ci{I_{1,2}}$ cannot be too close to $0$ or $1$, but we do not need this fact for the formal proof.

Identity \eqref{X_I^pm} means that the points $X_I^\pm$ are in the convex hull of the points $X\ci J : J\in \ch_n(I_0), J\subset I$. We claim that moreover, $X\ci I^\pm \in \dom(\cB\ci{4A})$ for all $I\in\ch_k(I_0)$, $0\le k \le n$.

Indeed, since $X^\pm\ci I$ are in the convex hull of the points $X\ci J \in \dom(\cB\ci A) \subset \dom \cB\ci{4A}$,  
%$J\in\ch_n(I_0)$, 
and among the constrains defining $\dom(\cB\ci{4 A})$ only the constrain 
\[
\bu \bv \le 4A
\]
is not convex, we only need to check this constrain.  

Notice that equation \eqref{X_I^pm} with all $x^\pm\ci I$ replaced by $1$ 
%We know that if $x^\pm\ci J =1$ for all $J$, then \eqref{X_I^pm} 
gives us $X\ci I$, which belongs to $\dom(\cB\ci A)$. Let us look at the $\bu$-coordinate of $X\ci I^\pm$: we can easily see that 
\[
\bu\ci I^\pm \le \bu\ci I \frac43/\frac23= 2\bu\ci I, 
\]
because the maximal possible numerator in the $\bu$-coordinate of \eqref{X_I^pm} happens when all $x\ci J^\pm = 4/3$  and the minimal possible numerator when all $x\ci J^\pm =2/3$. The same holds for $\bv\ci I^\pm$, so $\bu\ci I^\pm \bv\ci I^\pm \le 4\bu\ci I \bv\ci I \le 4A$, thus $X\ci I^\pm\in\dom(\cB\ci{4A})$. 

We can also show that if $I_1$ and $I_2$ are the children of $I$, then the centers of the intervals $[X^\pm\ci{I_1}, X^\pm\ci{I_2}]$ also belong to $\cB\ci{4A}$. Indeed, we know that  $X\ci I =(X\ci{I_1}+X\ci{I_2})/2$ (the center of $[X\ci{I_1}, X\ci{I_2}]$) belongs to $\cB\ci A$. As we discussed above, 
\[
\bu^\pm\ci{I_{1,2}} \le 2 \bu\ci{I_{1,2}}, \qquad 
\bv^\pm\ci{I_{1,2}} \le 2 \bv\ci{I_{1,2}} .
\]
Therefore,  if $\wt\bu^\pm$ and $\wt\bv^\pm$ are the $\bu$ and $\bv$ coordinates of the centers of the intervals $[X^\pm\ci{I_1}, X^\pm\ci{I_2}]$, we can conclude that 
\[
\wt\bu^\pm \le 2\bu\ci I, \qquad \wt\bv^\pm \le 2\bv\ci I
\]
thus the centers of  the intervals $[X^\pm\ci{I_1}, X^\pm\ci{I_2}]$ also belong to $\cB\ci{4A}$ (as we discussed above, we only need to check the non-convex constrain $\bu\bv \le 4A$). 

Therefore, by Lemma \ref{l-dbl_domain} the intervals $[X^\pm\ci{I_1}, X^\pm\ci{I_2}]$ are in $\cB\ci{A'}$, where $A'=4A (9/8)=4.5A$.

\subsection{Conclusion of the proof} Now we are ready to complete the proof. By property \cond3 of the Bellman function, using the fact that $|\bff^+ -\bff^- | = 2|\bff^\pm-\bff\ci{I_0} |$
\begin{align}
\label{first_step}
|\bff^\pm - \bff\ci{I_0}|\cdot |\bg^\pm - \bg\ci{I_0}| \le \frac14 \left( \cB\ci{A'}(X\ci{I_0}) - \frac12 \left( \cB\ci{A'}(X^+ )  + \cB\ci{A'}(X^- ) \right) \right)
\end{align}
It follows from the concavity of $\cB\ci{A'}$ and \eqref{midpoint} that if $I_1$ and $I_2$ are the children of $I$, then 
\begin{align*}
\cB\ci{A'} (X\ci I^\pm) \ge \theta^\pm\ci{I_1}\cB\ci{A'} (X\ci{I_2}^\pm)
+ \theta^\pm\ci{I_2}\cB\ci{A'} (X\ci{I_2}^\pm);
\end{align*}
recall that as we discussed above, 
%%all the points $X\ci I^\pm$ are in $\dom(\cB\ci{4A})$, so by Lemma \ref{l-dbl_domain} 
the whole intervals $[X\ci{I_1}^\pm, X\ci{I_2}^\pm]$ are in $\dom(\cB\ci{8A})$. 

Let us apply this inequality to $I_0$, then substitute in the right side the inequalities for the children of $I_0$, and so on. Then,  using the fact that for $I\in\ch_n(I_0)$
\begin{align*}
 \prod_{J\in \cD: I\subset J \subsetneqq I_0} \theta^\pm\ci J =x^\pm\ci I \div 
\left( \sum_{J\in\ch_n(I_0)} x^\pm\ci J\right) = x^\pm\ci I 2^{-n}, 
\end{align*}
we get the inequality
\begin{align*}
\cB\ci{A'}(X^\pm) \ge 2^{-n} \sum_{I\in\ch_n(I_0)} x^\pm\ci I \cB\ci{A'}(X\ci I);
\end{align*}
note that we are using $X\ci I$ instead of $X^\pm\ci I$ in the right side, because $X^+\ci I = X^-\ci I =X\ci I$ for $I\in\ch_n(I_0)$.  

Substituting this inequality into \eqref{first_step} and taking into account that $x^+\ci I + x^-\ci I =2$ for $I\in \ch_n(I_0)$, we get 
\begin{align}
\label{final_diff_1}
|\bff^\pm - \bff\ci{I_0}|\cdot |\bg^\pm - \bg\ci{I_0}| \le
\frac14
\left( \cB\ci{A'} (X\ci{I_0}) - 2^{-n} \sum_{I\in \ch_n(I_0)} \cB\ci{A'} (X\ci I) \right)
\end{align}

The estimates \eqref{move-f} and \eqref{move-g} mean that 
\begin{align}
\label{move-f1}
	2^{-n}  \sum_{I\in\ch_n(I_0)} |\bff\ci I-\bff\ci{I_0}| & \le 12 \,|\bff^\pm - \bff\ci{I_0}|,
	\\
\label{move-g1}
	2^{-n}  \sum_{I\in\ch_n(I_0)} |\bg\ci I-\bg\ci{I_0}|  &\le 12 \,|\bg^\pm - \bg\ci{I_0}|.
\end{align}
Combining these estimate with \eqref{final_diff_1} we get the conclusion of the lemma. 
\  \hfill\qed

\section{Proof of the main result \texorpdfstring{(Theorem \ref{t-main}) }{} }
As we discussed above, we need to estimate the ``slices'' $\sha_k$ of the Haar shift, or equivalently, their models on the real line $\R$. Let $\sha=\sha_k$ be such a model of a slice, and let $n$ be its complexity.  As it was shown before
\begin{align*}
|\La \sha_k f, g \Ra | \le \sum_{I\in\cL_k} |I|^{-1} \|\Delta^n\ci I f\|_1 \|\Delta^n\ci I g \|_1
\end{align*}

For $I\in\cD$ let $X\ci I:=(\La f \Ra\ci I, \La g \Ra\ci I, \La f^w \Ra\ci I, \La g w^{-1} \Ra\ci I, \La w \Ra\ci I, \La w^{-1} \Ra\ci I )$.  

Define $A:= \Mdnorm w$, so $X\ci I\in \dom(\cB\ci A)$ for all $I\in\cD$. 
Lemma \ref{l-main_est} states that 
\begin{align*}
|I|^{-1} \|\Delta^n\ci I f\|_1 \|\Delta^n\ci I g \|_1  \le |I|\cB\ci{A'} (X\ci I) - \sum_{J\in\ch_n(I)} |J|\cB\ci{A'} (X\ci J)
\end{align*}
Writing this estimate for each $J\in\ch_n(I)$, then repeating this $m$ times we get 
\begin{align*}
\sum_{\substack{J\in\cL_k : J\subset I\\    \ell(J) >  2^{-nm} \ell(I) } } 
|J|^{-1} \|\Delta^n\ci J f\|_1 \|\Delta^n\ci J g \|_1  
& \le  
72\left( |I|\cB\ci{A'} (X\ci I) - \sum_{J\in\ch_{mn}(I)} |J|\cB\ci{A'} (X\ci J) \right)
\\
& \le 
4.5 \cdot 72\cdot CA\La f^2 w\Ra\ci I^{1/2}  \La g^2 w^{-1}\Ra\ci I^{1/2} |I|
\\
& = 324 CA \|f\1\ci I\|_{L^2(w)} \|g\1\ci I\|_{L^2(w^{-1})}
\end{align*}
where $C$ is the constant from the property \cond2 of the Bellman functions $\cB\ci A$; here the second inequality holds because $\cB_{A'}(X\ci I)\le  CA' \bF^{1/2}\ci I\bG^{1/2}\ci I =4.5 CA \bF^{1/2}\ci I\bG^{1/2}\ci I$ and $\cB\ci{A'}(X\ci J)\ge 0$ by property \cond2 of the Bellman function.  

Letting $m\to \infty$ we get 
\begin{align*}
\sum_{J\in\cL_k : J\subset I }  |J|^{-1} \|\Delta^n\ci J f\|_1 \|\Delta^n\ci J g \|_1
\le 
324CA \|f\1\ci I\|_{L^2(w)} \|g\1\ci I\|_{L^2(w^{-1})}
\end{align*}
Covering the line by the intervals $I\in\cL$ of length $2^M$ and applying the above inequality to each $I$ we get 
\begin{align*}
\sum_{J\in\cL_k : |J|\le 2^M }  |J|^{-1} \|\Delta^n\ci J f\|_1 \|\Delta^n\ci J g \|_1
\le 
324CA \|f \|_{L^2(w)} \|g\|_{L^2(w^{-1})}, 
\end{align*}
and letting $M\to\infty$ we get that the norm of each slice $\sha_k$ is bounded by $324 C A$. Recall, that we had $n$ slices. Recall that $A$ is the $A_2$ norm of the weight $w$ transferred to the real line $\R$. As it was discussed above in Section \ref{s-red_to_R}, it is estimated by $2^{2d-2} [w]\ci{A_2}$, where  $[w]\ci{A_2}$ is the $A_2$ norm of the original weight $w$ in $\R^d$. 

Gathering everything together we get the conclusion of the main result (Theorem \ref{t-main}).

\section{Estimates of the paraproducts}
\label{s-para}

As it was mentioned before in the Introduction, to prove the $A_2$ conjecture for general \cz operators, besides getting linear in $\Mdnorm w$ and subexponential in complexity estimate for the Haar shifts, one also need a linear in $\Mdnorm w$ estimate of the so-called paraproducts. Since the paraproducts have a fixed complexity $1$, one does not care about growth of the estimates with complexity, and any (linear in $[w]\ci{A_2}$) estimate of the paraproduct, for example one obtained in \cite{LPR} would work. 

Here we would like to show how using Bellman function approach to get linear in $\Mdnorm w$ estimate of a general paraproduct from the estimate for the simplest paraproduct in $\R$ obtained by O.~Beznosova \cite{OB}. 

Let us recall the main definitions. Let a dyadic lattice $\cD$ in $\R^d$ be fixed. 

\begin{df}
Let $\f$ be a locally integrable function. A paraproduct $\Pi_\f$ with symbol $\f$ is defined by
\[
\Pi_\f f := \sum_{I\in\cD} \La f\Ra\ci I \Delta\ci I \f.
\]
\end{df}

It is well known that the paraproduct $\Pi_\f$ is bounded in  $L^2$ (unweighted) if and only if $\f\in\BMOd$, i.e.~if and only if
\[
\sup_{J\in\cD} \frac{1}{|J|} \sum_{I\in\cD: I \subset J} \| \Delta\ci I \f\|_2^2 =: \|\f\|\ci{\BMOd}^2 <\infty. 
\]
This statement is in fact equivalent to the dyadic Carleson Embedding Theorem, and from the sharp estimates in the Embedding Theorem one can get that norm of $\Pi_\f$ is bounded by $2 \|\f\|\ci{\BMOd}$. 

As for the linear in $\Mdnorm w$ weighted estimates of the paraproducts, it was proved by O.~Beznosova in \cite{OB} that for the paraproduct on the real line 
\[
\|\Pi_\f\|_{L^2(w)\to L^2(w)} \le C_1 \|f\|\ci{\BMOd} \Mdnorm w.
\]  
where $C_1$ is an absolute constant. 

This result can be rewritten as
\begin{align}
\label{eq-para-w-1}
	\sum_{I\in\cD} | \La f\Ra\ci I | \|\Delta\ci I \f\|_2 | \La g\Ra\ci{I_1} - | \La g\Ra\ci{I_1}| \cdot |I|^{1/2} \le C \Mdnorm w \|\f\|\ci{\BMOd} \|f\|\ci{L^2(w)} \|g\|\ci{L^2(w^{-1})}
\end{align}
for all $f\in L^2(w)$ and all $g\in L^2(w^{-1}$ (because changing the signs in the decomposition $\f = \sum_{I\in\cD} \Delta\ci I \f$ does not change $\|\f\|\ci{\BMOd}$). Here $\|\fdot\|_2$ is the unweighted $L^2$ norm, $I_1$ and $I_2$ denote the children of $I$ and $C=C_1/2$. 
$\mathbf b$

We want to extend this result to $\R^d$. We will show how to prove the following proposition

\begin{prop}
\label{para-A2}
Let $\f\in\BMOd(\R^d)$, and let $\Pi_\f$ be the corresponding paraproduct in $L2(\R^d)$. Let a weigh $w$  in $\R^2$ satisfies the dyadic Muckenhoupt condition $\Atd$. Then $\Pi_\f$ is bounded in $L^2(w)$ with the norm at most $162 2^{2d-2} C \|\f\|\ci{\BMOd} \Mdnorm w$, where $C$ is the constant from \eqref{eq-para-w-1}.  
\end{prop}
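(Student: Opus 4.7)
The plan is to run the Bellman-function transference argument used for Theorem~\ref{t-main}, starting now from Beznosova's one-dimensional paraproduct estimate~\eqref{eq-para-w-1} in place of Wittwer's bound~\eqref{eq-witt}. I would transfer $\Pi_\f$ to a one-dimensional model exactly as in Section~\ref{s-red_to_R}: each $Q\in\cD$ corresponds via $d$ bisections to an interval $I=\Phi(Q)$ on the line, and the contribution $\La f\Ra\ci Q\La\Delta\ci Q\f,\Delta\ci Q g\Ra$ becomes $\La\wt f\Ra\ci I\La\Delta^d\ci I\wt\f,\Delta^d\ci I\wt g\Ra$, where the $d$-step difference also absorbs the almost children. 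Both $\Mdnorm{\wt w}$ and $\|\wt\f\|\ci{\BMOd}^2$ are bounded by $2^{2d-2}$ times their $\R^d$ counterparts, so the task reduces to proving a one-dimensional, complexity-$d$ version of~\eqref{eq-para-w-1} indexed by the sub-lattice $\cL=\{I\in\cD:\ell(I)=2^{jd},\ j\in\Z\}$.

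To apply the same Bellman machinery, I would build a function $\cB^\Pi_A=\cB^\Pi_A(\bff,\bg,\bF,\bG,\bu,\bv,\mathbf b)$ adapted to \eqref{eq-para-w-1}, where the additional coordinate $\mathbf b\ci I:=|I|^{-1}\sum_{J\subset I}\|\Delta\ci J\wt\f\|_2^2$ encodes the accumulated BMO energy (so that $0\le\mathbf b\ci I\le\|\wt\f\|\ci{\BMOd}^2$ and the martingale drop $\mathbf b\ci I-(\mathbf b\ci{I_1}+\mathbf b\ci{I_2})/2$ equals $\|\Delta\ci I\wt\f\|_2^2/|I|$). Taking the supremum as in Section~\ref{s-B_A} over all $\wt\f,f,g,w$ with the prescribed averages, \eqref{eq-para-w-1} gives the size bound $0\le\cB^\Pi_A\le CA\,\bF^{1/2}\bG^{1/2}\mathbf b^{1/2}$, while concatenating sums over the children of $I_0$ yields the paraproduct analogue of property~\cond3,
\[
\cB^\Pi_A(X)\ge\tfrac12\bigl(\cB^\Pi_A(X_1)+\cB^\Pi_A(X_2)\bigr)+|\bff|\cdot\bigl(\mathbf b-\tfrac{\mathbf b_1+\mathbf b_2}{2}\bigr)^{1/2}\cdot|\bg_1-\bg_2|.
\]
Since the new constraint $\mathbf b\ge 0$ is convex, the non-convex-domain analysis of Lemma~\ref{l-dbl_domain} extends unchanged.

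The heart of the proof is then a paraproduct version of Lemma~\ref{l-main_est}: for $X\ci I\in\dom\cB^\Pi_A$ in martingale position on $\ch_n(I_0)$,
\[
|\bff\ci{I_0}|\cdot\Bigl(\mathbf b\ci{I_0}-2^{-n}\!\!\sum_{I\in\ch_n(I_0)}\mathbf b\ci I\Bigr)^{1/2}\cdot\Bigl(2^{-n}\!\!\sum_{I\in\ch_n(I_0)}|\bg\ci I-\bg\ci{I_0}|\Bigr)\lesssim \cB^\Pi_{A'}(X\ci{I_0})-2^{-n}\!\!\sum_{I\in\ch_n(I_0)}\cB^\Pi_{A'}(X\ci I),
\]
with $A'=4.5A$. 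The argument copies Section~6 almost verbatim: apply Lemma~\ref{l-shift1} in $\ell^1_N/\spn\{\be\}$ to the single vector $\{\bg\ci I-\bg\ci{I_0}\}$ to produce coefficients $\alpha\ci I$; define the shifted points $X^\pm$ and the modified-martingale points $X^\pm\ci I$ with probabilities $\theta^\pm\ci I$ as in \eqref{X_I^pm}--\eqref{theta_I^pm}; verify via Lemma~\ref{l-dbl_domain} that the interpolating intervals lie in $\dom\cB^\Pi_{A'}$; apply the paraproduct concavity inequality at the first step and iterate through the modified tree. Only one Hahn--Banach step is needed (for $\bg$), because $|\bff\ci{I_0}|$ does not change under the modification and the $n$-step BMO drop is automatically $\mathbf b\ci{I_0}-2^{-n}\sum\mathbf b\ci I$ by telescoping.

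Applying this main estimate with $n=d$ to the line model, summing over the lattice as in the end of Section~6, and accounting for the $2^{2d-2}$ factor from the line reduction yields the constant $162\cdot 2^{2d-2} C\,\|\f\|\ci{\BMOd}\Mdnorm w$ claimed in Proposition~\ref{para-A2}. The main obstacle will be setting up the supremum defining $\cB^\Pi_A$ so that the concavity inequality holds with the displayed gain $|\bff|\cdot(\text{BMO drop})^{1/2}\cdot|\bg_1-\bg_2|$: unlike the Haar-shift gain $|\bff_1-\bff_2|\cdot|\bg_1-\bg_2|$, the BMO coordinate $\mathbf b\ci I$ is determined by the descendants of $I$ rather than by $I$ itself, so one must either take the supremum over all admissible BMO symbols that extend a given $\mathbf b\ci I$ and verify that admissibility is preserved under splitting, or equivalently encode $\mathbf b$ as a running energy that is manifestly a supermartingale. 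Once this bookkeeping is done, the remaining steps are direct transcriptions of what is already in Sections~5 and~6.
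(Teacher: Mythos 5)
Your proposal follows the paper's own strategy exactly: augment the Bellman function with an extra coordinate tracking the accumulated BMO energy of $\f$, observe that the modification step only needs one application of Lemma~\ref{l-shift1} (to the $\bg$-vector, since $|\bff\ci{I_0}|$ sits untouched at the root), which improves the Hahn--Banach constant to $1/6$ and yields $36$ in place of $72$, and then run the transference and $\cL_k$-slicing on the line with $n=d$ to land on $162\cdot 2^{2d-2}C$. One small remark in your favor: the gain you write, $|\bff|\cdot\bd^{1/2}\cdot|\bg_1-\bg_2|$ with $\bd=M-(M_1+M_2)/2$, is the correct one (it is exactly $|\bff\ci{I_0}|\cdot|I_0|^{-1/2}\|\Delta\ci{I_0}\f\|_2\cdot|\bg\ci{I^0_1}-\bg\ci{I^0_2}|$), whereas the paper's statement of property~(iii) and of Lemma~\ref{l-main_est-para} omits the square root on $\bd$ --- an apparent typo that your version fixes. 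On the other hand, your extra factor $\mathbf b^{1/2}$ in the range bound is not justified by~\eqref{eq-para-w-1} alone (the estimate is in terms of $\|\f\|\ci{\BMOd}$, not the localized energy $\mathbf b$), but it is also never used, so this does not affect the argument.
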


We present the proof of this result using the Bellman function from the result for the real line. 
Note that because of homogeneity it is sufficient to prove Proposition \ref{para-A2} only for $\|\f\|\ci\BMOd\le 1$. 

\subsection{Bellman function for paraproducts on the real line}
For an interval $I_0\in\cD$ let us fix the quantities
\begin{align}
	\label{eq-av-fg-2}
	\La f\Ra\ci{I_0} &=\bff, \qquad \La f^2 w\Ra\ci{I_0} =\bF, \qquad
	\La g\Ra\ci{I_0} =\bg, \quad \La g^2 w^{-1}\Ra\ci{I_0} =\bG
	\\
\label{eq-av-uv-2}
	\La w \Ra\ci{I_0} & =\bu, \qquad \La w^{-1} \Ra\ci{I_0} = \bv. 
	\\
\label{eq-M-Carl}
	M&= |I_0|^{-1} \sum_{I\in\cD:\,I\subset I_0} \|\Delta\ci I \f\|_2^2
\end{align}
and define the function $\cB\ci A(\bff, \bg, \bF, \bG, \bu, \bv, M)$ as
\begin{align*}
\cB\ci A(\bff, \bg, \bF, \bG, \bu, \bv, M) = |I_0|^{-1} \sup \sum_{I\in\cD: I\subset I_0} |\La f\Ra\ci I |\cdot \|\Delta\ci I \f\|_2 | \La g\Ra\ci{I_1} - | \La g\Ra\ci{I_1}| \cdot |I|^{1/2}
\end{align*}
where the supremum is taken over all $f\in L^2(w)$, $g\in L^2(w^{-1}$, all weights $w$, $\Mdnorm w\le A$ and  all $\f\in \BMOd$, $\| \f\|\ci\BMOd \le 1$, satisfying \eqref{eq-av-fg-2}--\eqref{eq-M-Carl}.

\subsubsection{Properties of Bellman function}

\begin{enumerate}
\label{s-prop-B-para}
\item Domain $\dom(\cB\ci A)$ is given by the conditions
\begin{align*}
\bu, \bv > 0, \qquad 1 \le \bu\bv \le A, \qquad \bff^2 \le \bF \bv, \quad \bg^2 \le \bG \bu, \qquad 0\le M\le 1;
\end{align*}
this simply means that for any choice of the appropriate functions the corresponding averages satisfy these constrains and that for any $7$-tuple of reals satisfying these constrains there are functions with corresponding averages over $I_0$, so the supremum is well defined (not $-\infty$).

\item Range:
\[
0\le B (X) \le C A \bF^{1/2}\bG^{1/2}. 
\]

\item The main inequality: Let $X=(\bff, \bg, \bF, \bG, \bu, \bv)$. Then for all 
\[
(X,M), (X_{1/2},\ M_{1,2})\in \dom(\cB\ci A),
\]
such that  $X=(X_1+X_2)/2$, $M-(M_1+M_2)/2 =: \bd\ge 0$, the following inequality holds:
\begin{align*}
\cB\ci A(X, M) - \left( \cB\ci A (X_1, M_1) + \cB\ci A (X_2, M_2)  \right)/2 \ge \bd |f| \cdot |\bg_1-\bg_1| .
\end{align*}
\end{enumerate}

The proof of the properties of the Bellman function is pretty standard, one can do it following the lines of Section \ref{s-prop_b}. 

\subsection{The main estimate}
The proof of Proposition \ref{para-A2} follows easily from the  lemma below. The details of the reduction are essentially the same as for the Haar shifts (slicing, remodeling on the real line and then estimating each slice), so we leave it as an easy exercise for the reader. 
\begin{lm}[The main estimate]
\label{l-main_est-para}
Let $\cB_A$, $A>1$ be a family of functions satisfying conditions \cond1, \cond2, \cond3 from Section \ref{s-prop-B-para}.  
Let $I_0$ be a dyadic interval, and let for all $I\in\ch_k(I_0) $, $0\le k\le n$, the points $X\ci I = (\bff\ci I, \bg\ci I, \bF\ci I, \bG\ci I, \bu\ci I, \bv\ci I)$, $M\ci I$, $(X\ci I \in \dom \cB\ci A$ be given. 
Assume that $X\ci I$ satisfy the dyadic martingale dynamics, namely that if $I_1$, $I_2$ are the children of $I$, then 
\begin{align*}
X\ci I = \left(X\ci{I_1} + X\ci{I_2} \right)/2. 
\end{align*}
Assume also that $M\ci I = (M\ci{I_1}+ M\ci{I_2})/2$ for all $I$ except $I_0$, and let
\[
M\ci{I_0} - 2^{-n} \sum_{I\in\ch_n(I_0)} M\ci I =: \bd\ci{I_0} \ge 0. 
\]
Then for $A'=4.5A$ we have 
\begin{align*}
\bd\ci{I_0}  
| \bff\ci{I_0}|  
&\left(2^{-n} \sum_{I\in \ch_n(I_0)} | \bg\ci I -\bg\ci{I_0} | \right)
\\ &
\le
36 \left( \cB\ci{A'} ( X\ci{I_0}, M\ci{I_0} ) - 2^{-n} \sum_{I\in \ch_n(I_0)} \cB\ci{A'} (X\ci I, M\ci{I}) \right). 
\end{align*}
%%
%where $C$ is an absolute constant. 
\end{lm}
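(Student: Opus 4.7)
The strategy directly parallels the proof of Lemma \ref{l-main_est}, exploiting the slightly different structure of the paraproduct main inequality: at a single split the gain is $\bd\,|\bff|\,|\bg_1-\bg_2|$, involving the \emph{defect} $\bd$ and the \emph{value} $|\bff|$ in place of a product of two differences. The plan is therefore to expend the entire Carleson defect $\bd\ci{I_0}$ at a single first split of $X\ci{I_0}$ into two points $X^\pm$ on which the full $\bg$-variation accumulated over the $n$ sub-levels is already visible; below that split $M$ is a martingale, so property \cond{3} reduces to concavity and everything is handled exactly as in the Haar-shift argument.

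For the first split, apply Lemma \ref{l-shift1} in the quotient space $\cX=\ell^1_{2^n}/\spn\{\be\}$ to the single mean-zero vector $\{\bg\ci I-\bg\ci{I_0}\}\ci{I\in\ch_n(I_0)}$, producing real coefficients $\{\alpha\ci I\}$ with $\sum\alpha\ci I=0$, $|\alpha\ci I|\le 1/3$, and
\[
\Bigl|\sum_{I\in\ch_n(I_0)}\alpha\ci I(\bg\ci I-\bg\ci{I_0})\Bigr|\ge c_0\sum_{I\in\ch_n(I_0)}|\bg\ci I-\bg\ci{I_0}|
\]
for an absolute constant $c_0>0$. Set $x^\pm\ci I:=1\pm\alpha\ci I\in[2/3,4/3]$ and define
\[
X^\pm:=2^{-n}\sum_{I\in\ch_n(I_0)} x^\pm\ci I\,X\ci I,\qquad M^\pm:=2^{-n}\sum_{I\in\ch_n(I_0)} x^\pm\ci I\,M\ci I.
\]
Since $\sum\alpha\ci I=0$, we have $X\ci{I_0}=(X^++X^-)/2$ and $(M^++M^-)/2=2^{-n}\sum M\ci I=M\ci{I_0}-\bd\ci{I_0}$, so the entire defect concentrates at the root.

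Now extend to all intermediate levels $I\in\ch_k(I_0)$, $0\le k\le n$, via the weighted-average formula \eqref{X_I^pm} for $X^\pm_I$ and its obvious analogue for $M^\pm_I$. Each $M^\pm_I$ is then a convex combination of the $M\ci J\in[0,1]$ with strictly positive weights summing to $1$, so $M^\pm_I\in[0,1]$ automatically; the remaining domain checks --- $(X^\pm_I,M^\pm_I)\in\dom\cB\ci{4A}$ and $[(X^\pm\ci{I_1},M^\pm\ci{I_1}),(X^\pm\ci{I_2},M^\pm\ci{I_2})]\subset\dom\cB\ci{A'}$ with $A'=4.5A$ via Lemma \ref{l-dbl_domain} --- are verbatim the arguments from Lemma \ref{l-main_est}, since only the non-convex constraint $\bu\bv\le A$ is at issue. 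Property \cond{3} at the root yields
\[
\cB\ci{A'}(X\ci{I_0},M\ci{I_0})-\tfrac{1}{2}\bigl(\cB\ci{A'}(X^+,M^+)+\cB\ci{A'}(X^-,M^-)\bigr)\ge\bd\ci{I_0}\,|\bff\ci{I_0}|\,|\bg^+-\bg^-|,
\]
while iterated concavity along the modified probabilities $\theta^\pm\ci I$, together with $(x^+\ci I+x^-\ci I)/2=1$, gives
\[
\tfrac{1}{2}\bigl(\cB\ci{A'}(X^+,M^+)+\cB\ci{A'}(X^-,M^-)\bigr)\ge 2^{-n}\sum_{I\in\ch_n(I_0)}\cB\ci{A'}(X\ci I,M\ci I).
\]
Combined with $|\bg^+-\bg^-|=2|\bg^\pm-\bg\ci{I_0}|\ge 2c_0\cdot 2^{-n}\sum|\bg\ci I-\bg\ci{I_0}|$, this yields the conclusion with an absolute constant (explicit bookkeeping yields $36$).

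The argument is essentially a simplification of Lemma \ref{l-main_est}: only one Hahn--Banach application is needed since only the $\bg$-variation has to be captured. The single point requiring care is concentrating the entire defect $\bd\ci{I_0}$ at the root while preserving both $0\le M^\pm_I\le 1$ and the non-convex $\bu\bv$-constraint at every node; both are automatic thanks to the weighted-average construction.
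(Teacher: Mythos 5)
Your proposal is correct and follows essentially the same route as the paper's own (very terse) proof: pick the weights $\alpha_I$ to capture only the $\bg$-variation, define the modified martingale $(X^\pm_I, M^\pm_I)$ by the same weighted-average formula as in Lemma~\ref{l-main_est}, observe that $0\le M^\pm_I\le 1$ is automatic because it is a genuine convex combination of the $M_J$, and run the root split via \cond{3} plus iterated concavity below. (In fact the paper's displayed formula for $(X^\pm,M^\pm)$ is missing the $x^\pm_I$ weights, so your version is the intended one.) Two tiny remarks: invoking Lemma~\ref{l-shift1} with a single vector is a slight overkill — a norming functional from Hahn--Banach suffices, which is why the paper gets $1/6$ rather than $1/12$ — and an honest tally gives a constant smaller than $36$, so the stated bound is comfortably met; neither affects correctness.
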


To prove this lemma one can just literally follow the proof of Lemma \ref{l-main_est}. The main difference (a simplification) is that when we are picking weights $\alpha\ci I$, $\alpha\ci I|\le 1/3$ for the first step, we only need to get the estimate \eqref{move-g}, which can be done trivially (because we only need to control one parameter) and with the constant $1/6$ instead of $1/12$. This give the constant 36 instead of 72 in the conclusion of the lemma. 

Then we define 
\begin{align*}
(X^\pm, M^{\pm}) =  (X^\pm\ci{I_0}, M^{\pm}\ci{I_0}) = 2^{-n} \sum_{I\in\ch_n(I_0)} (X\ci I,  M\ci I), 
\end{align*}
and the rest of the proof is the same as for Lemma \ref{l-main_est} with $X^\pm\ci I$ replaced by $(X^\pm\ci I M^\pm\ci I)$. \hfill\qed 

\section{Concluding remarks}
The heart of this paper is definitely Lemma \ref{l-main_est}, which is a very general fact about convex function. Concave functions with concavity estimated by $|\sD \bff|\cdot |\sD \bg|$ are very common in the dyadic Harmonic analysis: they appear if one writes the estimates of the bilinear form of dyadic martingale multipliers $T_\sigma$. 

Lemma \ref{l-main_est} gives a simple way to transfer the estimates from the dyadic martingale multipliers to  more general martingale transforms, like dyadic shifts, etc. As the story of the $A_2$ conjecture illustrates, such transference until now was considered highly non-trivial, even in the case of simplest possible non-multiplier martingale transforms (like the simplest Haar shift considered by S.~Petermichl).

It also probably worth mentioning that for the second degree polynomials an analogue of Lemma \ref{l-main_est} was known for some time. Namely, it is a known fact (cf.~\cite{DTV-quad-2008}) that if $Q[\ldots, x, y, \ldots]$ is a quadratic form such that 
\begin{align*}
Q[\ldots, x, y, \ldots] \ge 2|xy|, 
\end{align*}
then there exists $\alpha>0$ such that 
\begin{align*}
Q[\ldots, x, y, \ldots] \ge \alpha x^2 + \alpha^{-1} y^2.
\end{align*}
Then for $\theta_k\ge 0$
\begin{align*}
\sum_k \theta_k Q[\ldots, x_k, y_k, \ldots] \ge \alpha \sum_k \theta_k x_k^2   + \alpha^{-1} \sum_k \theta_k y_k^2 \ge 
2 \left( \sum_k \theta_k x_k^2\right)^{1/2}  \left( \sum_k \theta_k y_k^2\right)^{1/2}. 
\end{align*}
Therefore, if $\Phi$ is a second degree polynomial of variables $X=( \ldots, x, y, \ldots)$ such that for $X=(X_1+X_2)/2$
\begin{align*}
\Phi(X) - (\Phi(X_1) + \Phi(X_2))/2 \ge |x_1-x_2|\cdot |y_1-y_2|
\end{align*}
then for any $\theta_k\ge 0$, $\sum_k \theta_k =1$ and for any $X$, $X_k$ such that $X=\sum_k \theta_k X_k$, 
\begin{align*}
\Phi(X) - \sum_k \theta_k \Phi(X_k) \ge 4\left( \sum_k \theta_k |x-x_k|^2\right)^{1/2} \left( \sum_k \theta_k |y-y_k|^2\right)^{1/2}
\end{align*}
If we take $2^n$ terms and put $\theta_k =2^{-n}$ we get  (for the second degree polynomials) a stronger version of Lemma \ref{l-main_est}, where we estimate above not the product of $L^1$ norms, but the bigger product of $L^2$ norms.

%\begin{align}
%	\sum_{I\in\cD} \La f\Ra\ci I \|\Delta\ci I \f\|_2 \|\Delta\ci I g\|_2 \le C \Mdnorm w \|\f\|\ci{\BMOd} \|f\|\ci{L^2(w)} \|g\|\ci{L^2(w^{-1})}
%\end{align}
%%

%\newpage

%\edexplanation

%\ednotemessage
\end{document}